\newtheorem{algorithm}{Weak Galerkin Algorithm}
\newcommand{\bq}{\begin{equation}}
\newcommand{\eq}{\end{equation}}
\def\T{{\mathcal T}}
\def\E{{\mathcal E}}
\def\pT{{\partial T}}
\def\l{{\langle}}
\def\r{{\rangle}}
\def\bn{{\bf n}}
\def\bq{{\bf q}}
\def\3bar{{|\hspace{-.02in}|\hspace{-.02in}|}}
\title{A numerical study on the weak Galerkin method for the Helmholtz equation}
\author{lin Mu\thanks{Department of Mathematics,
Michigan State University, East Lansing, MI 48824, (linmu@math.msu.edu).}
\and Junping Wang\thanks{Division of Mathematical Sciences, National Science
Foundation, Arlington, VA 22230 (jwang@\break nsf.gov). The research
of Wang was supported by the NSF IR/D program, while working at the
Foundation. However, any opinion, finding, and conclusions or
recommendations expressed in this material are those of the author
and do not necessarily reflect the views of the National Science
Foundation.} \and Xiu Ye\thanks{Department of Mathematics and
Statistics, University of Arkansas at Little Rock, Little Rock, AR
72204 (xxye@ualr.edu). This research of Ye was supported in part by
National Science Foundation Grant DMS-1115097.} \and Shan
Zhao\thanks{Department of Mathematics, University of Alabama,
Tuscaloosa, AL 35487 (szhao@as.ua.edu). The research of Zhao was
supported in part by National Science Foundation Grant DMS-1016579.
}}
\begin{document}
\maketitle

\begin{abstract}
A weak Galerkin (WG) method is introduced and numerically tested for
the Helmholtz equation. This method is flexible by using
discontinuous piecewise polynomials and retains the mass
conservation property. At the same time, the WG finite element
formulation is symmetric and parameter free. Several test scenarios
are designed for a numerical investigation on the accuracy,
convergence, and robustness of the WG method in both inhomogeneous
and homogeneous media over convex and non-convex domains.
Challenging problems with high wave numbers are also examined. Our
numerical experiments indicate that the weak Galerkin is a finite
element technique that is easy to implement, and provides very
accurate and robust numerical solutions for the Helmholtz problem
with high wave numbers.
\end{abstract}

\begin{keywords}
Galerkin finite element methods,  discrete gradient, Helmholtz
equation, large wave numbers, weak Galerkin
\end{keywords}

\begin{AMS}
Primary, 65N15, 65N30, 76D07; Secondary, 35B45, 35J50
\end{AMS}
\pagestyle{myheadings}

\section{Introduction}
We consider the Helmholtz equation of the form
\begin{eqnarray}
-\nabla \cdot (d \nabla u) -\kappa^2u &=& f, \quad
\mbox{in}\;\Omega,\label{pde}\\
d\nabla u \cdot \bn - i \kappa u &=& g,\quad \mbox{on}\; \partial\Omega,\label{bc}
\end{eqnarray}
where $\kappa>0$ is the wave number, $f\in L^2(\Omega)$ represents a harmonic source,
$g\in L^2(\partial\Omega)$ is a given data function, and
$d=d(x,y)>0$ is a spatial function describing the dielectric
properties of the medium. Here $\Omega$ is a polygonal or polyhedral domain in
$\mathbb{R}^d\; (d=2,3)$.

Under the assumption that the time-harmonic behavior is assumed,
the Helmholtz equation (\ref{pde})
governs many macroscopic wave phenomena in the frequency domain
including wave propagation, guiding, radiation and scattering.
The numerical solution to the
Helmholtz equation plays a vital role in a wide range of
applications in electromagnetics, optics, and acoustics, such as
antenna analysis and synthesis, radar cross section calculation,
simulation of ground or surface penetrating radar, design of
optoelectronic devices, acoustic noise control, and seismic wave
propagation. However, it remains a challenge to design robust and
efficient numerical algorithms for the Helmholtz equation,
especially when large wave numbers or highly oscillatory solutions
are involved \cite{Zienkiewicz}.

For the Helmholtz  problem (\ref{pde})-(\ref{bc}), the corresponding
variational form is given by seeking $u\in H^1(\Omega)$ satisfying
\begin{equation}\label{wf}
(d\nabla u,\nabla
v)-\kappa^2(u,\;v)+i\kappa\l u,\;v\r_{\partial\Omega}= (f, v)
+\l g,v\r_{\partial\Omega},\qquad \forall v\in H^1(\Omega),
\end{equation}
where $(v,w)=\int_\Omega vwdx$ and
$\l v,w\r_{\partial \Omega}=\int_{\partial \Omega}vwds$.
In a classic finite element procedure, continuous polynomials are used to approximate the true solution $u$. In many situations, the use of discontinuous functions in the finite element approximation often provides the methods with
much needed flexibility to handle more complicated practical problems. However, for discontinuous polynomials, the strong gradient $\nabla$ in (\ref{wf}) is no longer meaningful. Recently developed weak Galerkin finite element methods \cite{wy}
provide means to solve this difficulty by replacing the differential operators by the weak forms as distributions for discontinuous approximating functions.

Weak Galerkin (WG) methods refer to general finite element techniques for partial differential equations and were  first introduced and analyzed in \cite{wy} for second order elliptic equations.
Through rigorous error analysis,
optimal order of convergence of the WG solution in both
discrete $H^1$ norm and $L^2$ norm is established under
minimum regularity assumptions in \cite{wy}. The mixed weak Galerkin finite element method is studied in \cite{wy-mixed}.
The WG methods are by design using discontinuous approximating functions.

In this paper, we will apply WG finite element methods \cite{wy}
to the Helmholtz equation.
The WG finite element approximation to (\ref{wf}) can be derived naturally by simply replacing the differential operator gradient $\nabla$ in (\ref{wf}) by a weak gradient $\nabla_w$: find
$u_h\in V_h$ such that for all $v_h\in V_h$ we have
\begin{equation}\label{wg1}
(d\nabla_w u_h,\nabla_w
v_h)-\kappa^2(u_0,\;v_0)+i\kappa\l u_b,\;v_b\r_{\partial\Omega}= (f, v_0)
+\l g,v_b\r_{\partial\Omega},
\end{equation}
where $u_0$ and $u_b$ represent the values of $u_h$ in the interior and the boundary of the triangle respectively.  The weak gradient $\nabla_w$ will be defined precisely in the next section. We note that the weak Galerkin finite element formulation (\ref{wg1}) is simple, symmetric and parameter free.

To fully explore the potential of the WG finite element formulation (\ref{wg1}),
we will investigate its performance for solving the Helmholtz
problems with large wave numbers.
It is well known that the numerical
performance of any finite element solution to the Helmholtz equation
depends significantly on the wave number $k$. When $k$ is very large
-- representing a highly oscillatory wave, the mesh size $h$ has to be
sufficiently small for the scheme to resolve the oscillations. To
keep a fixed grid resolution, a natural rule is to choose $kh$ to be
a constant in the mesh refinement, as the wave number $k$ increases
\cite{IhlBabH,bao04}. However, it is known
\cite{IhlBabH,IhlBabHP,BIPS,BabSau} that, even under such a mesh
refinement, the errors of continuous Galerkin finite element
solutions deteriorate rapidly when $k$ becomes larger. This
non-robust behavior with respect to $k$ is known as the ``pollution
effect''.

To the end of alleviating the pollution effect,
various continuous or discontinuous finite element methods have been
developed in the literature for solving the Helmholtz equation
with large wave numbers
\cite{BIPS,BabSau,Melenk,Monk99,UWVF98,UWVF03,Farhat03,Farhat04,fw,abcm,Chung,cdg,cgl,Monk11}.
A commonly used strategy in these effective finite element methods is
to include some analytical knowledge of the Helmholtz equation, such
as characteristics of traveling plane wave solutions,
asymptotic solutions or fundamental solutions, into the finite
element space.
Likewise, analytical
information has been incorporated in the basis functions  of the boundary
element methods to address the high frequency problems
\cite{Giladi,Langdon06,Langdon07}. On the other hand,
many spectral methods, such as local spectral methods \cite{bao04},
spectral Galerkin methods \cite{shen05,shen07}, and spectral element
methods \cite{Heikkola,Ainsworth09} have also been developed for solving
the Helmholtz equation with large wave numbers.
Pollution effect can be effectively controlled in these spectral
type collocation or Galerkin formulations, because
the pollution error is directly related to the dispersion error,
i.e., the phase difference between the numerical and exact waves
\cite{IhlBab,Ainsworth04}, while the spectral methods typically
produce negligible dispersive errors.

The objective of the present paper is twofold.
First, we will introduce weak Galerkin methods for the Helmholtz equation.
The second aim of the  paper is to
investigate the performance of the WG methods for solving the Helmholtz equation
with high wave numbers.
To demonstrate the potential of the WG finite element methods in solving high frequency problems,
we will not attempt to build the analytical knowledge into the WG formulation (\ref{wg1})
and we will restrict ourselves to low order WG elements.
We will investigate the robustness and effectiveness of such plain WG methods
through many carefully designed numerical experiments.

The rest of this paper is organized as follows. In Section 2, we
will introduce a weak Galerkin finite element formulation for the
Helmholtz equation by following the idea presented in \cite{wy}. Implementation of the WG method for the problem (\ref{pde})-(\ref{bc}) is discussed in Section 3.  In Section 4, we shall
present some numerical results obtained from the weak Galerkin
method with various orders.
Finally, this paper ends with some concluding remarks.

\section{A Weak Galerkin Finite Element Method}

Let ${\cal T}_h$ be a partition of the domain $\Omega$ with mesh size
$h$. Assume that the partition ${\cal T}_h$ is shape regular so that
the routine inverse inequality in the finite element analysis holds
true (see \cite{ci}).  Denote by
$P_k(T)$ the set of polynomials in $T$ with degree no more than
$k$, and $P_k(e)$, $e\in {\partial T}$, the set of polynomials on each segment
(edge or face) of $\partial T$ with degree no more than $k$.

For $k\ge 0$ and given $\T_h$, we define the weak Galerkin (WG)  finite element space as follows
\begin{equation}\label{vh}
V_h=\left\{ v=\{v_0, v_b\}\in L^2(\Omega):\ \{v_0, v_b\}|_{T}\in P_k(T)\times P_k(e),e\in\partial T, \forall T\in {\cal T}_h \right\},
\end{equation}
where $v_0$ and $v_b$ are the values of $v$ restricted on the interior of element $T$ and the boundary of element $T$ respectively. Since $v_b$ may not necessarily be related to the trace of $v_0$ on $\partial T$, we write $v=\{v_0,v_b\}$.  For a given $T\in\T_h$, we define another vector space
\[
RT_k(T)=P_k(T)^d+\tilde{P}_k(T){\bf x},
\]
where $\tilde{P}_k(T)$ is the set of homogeneous polynomials of degree $k$ and ${\bf x}=(x_1,\cdots, x_d)$ (see \cite{bf}). We will find a locally defined discrete weak gradient from this space on each element $T$.

The main idea of the weak Galerkin method is to introduce weak derivatives for discontinuous functions and to use them in discretizing the corresponding variational forms such as (\ref{wf}).
The differential operator used in (\ref{wf}) is a gradient.  A weak gradient has been defined in \cite{wy}.  Now we define approximations  of the weak gradient as follows. For each $v=\{v_0, v_b\} \in V_h$, we define a discrete weak gradient $\nabla_{w} v\in RT_k(T)$ on each
element $T$ such that
\begin{equation}\label{d-g}
(\nabla_{w}v,\  \tau)_T = -(v_0,\  \nabla\cdot \tau)_T+
\l v_b, \ \tau\cdot\bn \r_\pT,\quad\quad\forall \tau\in RT_k(T),
\end{equation}
where $\nabla_{w}v$ is locally defined on each element $T$, $(v,w)_T=\int_T vwdx$ and
$\l v,w\r_{\partial T}=\int_{\partial T}vwds$. We will  use  $(\nabla_w v,\ \nabla_w w)$ to denote $\sum_{T\in\T_h}(\nabla_w v,\ \nabla_w w)_T$. Then the WG method for the Helmholtz equation (\ref{pde})-(\ref{bc}) can be stated as follows.

\smallskip

\begin{algorithm}
A numerical approximation for (\ref{pde}) and (\ref{bc}) can be
obtained by seeking $u_h=\{u_0,u_b\}\in V_h$  such that for all $v_h=\{v_0,v_b\}\in V_h$
\begin{equation}\label{WG}
(d\nabla_w u_h,\nabla_w
v_h)-\kappa^2(u_0,\;v_0)+i\kappa\l u_b,\;v_b\r_{\partial\Omega}= (f, v_0)
+\l g,v_b\r_{\partial\Omega}.
\end{equation}
\end{algorithm}

Denote by $Q_h u=\{Q_0 u,\;Q_bu\}$ the $L^2$ projection onto
$P_k(T)\times P_{k}(e)$, $e\in\partial T$. In other words, on each element
$T$, the function $Q_0 u$ is defined as the $L^2$ projection of $u$
in $P_k(T)$ and $Q_b u$ is the $L^2$ projection of $u$ in
$P_{k}(\partial T)$.

For equation (\ref{pde}) with Dirichlet boundary condition $u=g$ on $\partial\Omega$, optimal error estimates have been obtained in \cite{wy}.

For a sufficiently small mesh size $h$, we can derive following optimal error estimate for the Helmholtz equation (\ref{pde}) with the mixed boundary condition (\ref{bc}).

\begin{theorem} Let $u_h\in V_h$ and $u\in H^{k+2}(\Omega)$  be the solutions of (\ref{WG}) and (\ref{pde})-(\ref{bc}) respectively and assume that $\Omega$ is convex.
Then for $k\ge 0$,
there exists a constant $C$ such that
\begin{eqnarray}
\|\nabla_w( u_h-Q_hu)\| &\le&  Ch^{k+1}(\|u\|_{k+2}+\|f\|_k),\label{err1}\\
\|u_h-Q_hu\| &\le & Ch^{k+2}(\|u\|_{k+2}+\|f\|_k).\label{err2}
\end{eqnarray}
\end{theorem}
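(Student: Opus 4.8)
The plan is to adapt the classical duality argument of Schatz for indefinite sesquilinear forms to the weak Galerkin setting. First I would introduce the form
\[
a(v,w)=(d\nabla_w v,\nabla_w w)-\kappa^2(v_0,w_0)+i\kappa\l v_b,w_b\r_{\partial\Omega},
\]
so that (\ref{WG}) reads $a(u_h,v_h)=(f,v_0)+\l g,v_b\r_{\partial\Omega}$. The structural fact on which everything rests is the commuting identity $\nabla_w(Q_h u)=\mathbb{Q}_h(\nabla u)$, where $\mathbb{Q}_h$ denotes the $L^2$ projection onto $RT_k(T)$. This follows at once from the definition (\ref{d-g}): for $\tau\in RT_k(T)$ one has $\nabla\cdot\tau\in P_k(T)$ and $\tau\cdot\bn|_e\in P_k(e)$, so the projections $Q_0$ and $Q_b$ may be dropped against these polynomials, and integration by parts turns the right-hand side of (\ref{d-g}) into $(\nabla u,\tau)_T$. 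Testing the exact problem (\ref{pde})--(\ref{bc}) against $v_h$ and subtracting from (\ref{WG}), I would then derive the error equation $a(Q_hu-u_h,v_h)=\ell_u(v_h)$ for all $v_h\in V_h$, where the consistency functional $\ell_u$ collects the projection residuals and, by the approximation properties of $Q_h$ and $\mathbb{Q}_h$, obeys $|\ell_u(v_h)|\le Ch^{k+1}(\|u\|_{k+2}+\|f\|_k)\,\|\nabla_w v_h\|$.

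With respect to the Hermitian $L^2$ inner product the form satisfies a G\aa rding inequality: using $d>0$, one has $\mathrm{Re}\,a(v,v)=(d\nabla_w v,\nabla_w v)-\kappa^2\|v_0\|^2$ while $\mathrm{Im}\,a(v,v)=\kappa\|v_b\|^2_{\partial\Omega}$, so $a$ controls $\|\nabla_w v\|$ and the boundary trace but only modulo the negative term $-\kappa^2\|v_0\|^2$; there is no coercivity, which is the whole difficulty of the Helmholtz problem. To recover control of $\|v_0\|$ I would run an Aubin--Nitsche argument on the adjoint problem
\[
-\nabla\cdot(d\nabla\psi)-\kappa^2\psi=e_0\ \ \text{in}\ \Omega,\qquad d\nabla\psi\cdot\bn+i\kappa\psi=0\ \ \text{on}\ \partial\Omega,
\]
for which the convexity of $\Omega$ supplies the $H^2$ regularity estimate $\|\psi\|_2\le C\|e_0\|$. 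Writing $e_h=Q_hu-u_h$ and testing the adjoint problem with $e_h$, the commuting identity together with the approximation bounds for $\psi-Q_h\psi$ should produce $\|Q_0u-u_0\|\le Ch\,\|\nabla_w e_h\|+Ch^{k+2}(\|u\|_{k+2}+\|f\|_k)$.

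The energy estimate (\ref{err1}) then follows by the standard absorption step. Combining $\|\nabla_w e_h\|^2\le C\,\mathrm{Re}\,a(e_h,e_h)+C\kappa^2\|Q_0u-u_0\|^2$ with the error equation $a(e_h,e_h)=\ell_u(e_h)$ and the duality bound just obtained yields
\[
\|\nabla_w e_h\|^2\le Ch^{k+1}(\|u\|_{k+2}+\|f\|_k)\,\|\nabla_w e_h\|+C\kappa^2h^2\|\nabla_w e_h\|^2+\text{h.o.t.}
\]
As soon as $h$ is small enough that $C\kappa^2h^2<\tfrac12$ -- precisely the hypothesis that the mesh size be sufficiently small -- the quadratic term is absorbed on the left, and dividing out one factor of $\|\nabla_w e_h\|$ gives (\ref{err1}). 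Substituting this optimal energy bound back into the duality estimate of the previous paragraph upgrades $\|Q_0u-u_0\|$ to order $h^{k+2}$, which is (\ref{err2}).

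I expect the duality step to be the main obstacle. Because the indefinite term $-\kappa^2\|v_0\|^2$ removes all coercivity, the entire argument hinges on extracting a genuine factor of $h$ in the $L^2$ control of $e_0$ through the adjoint problem; this in turn requires both the $H^2$ regularity granted by convexity and a careful bookkeeping of the weak Galerkin consistency terms so that the remainder is truly higher order and can be absorbed for small $h$. By comparison the commuting identity, the G\aa rding splitting, and the projection approximation estimates are routine once the dual argument is in place.
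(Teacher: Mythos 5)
Your outline is correct and is essentially the argument the paper has in mind: the paper itself gives no proof, deferring to Theorems 8.3 and 8.4 of the cited Wang--Ye work, whose proofs rest on exactly the commuting identity $\nabla_w(Q_hu)=\mathbb{Q}_h(\nabla u)$, the consistency/error equation, and an Aubin--Nitsche duality step, with the Schatz-type absorption you describe being the standard modification needed for the indefinite Helmholtz term (and matching the paper's stated hypothesis that $h$ be sufficiently small). The only caveat worth flagging is that your smallness condition $C\kappa^2h^2<\tfrac12$ quietly absorbs the $\kappa$-dependence of the adjoint regularity constant into $C$, which is consistent with the theorem as stated since its constant is not claimed to be uniform in $\kappa$.
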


\begin{proof}
The proof of this theorem is similar to that of Theorem  8.3 and Theorem 8.4 in
\cite{wy} and is very long. Since the emphasis of this paper is to investigate the performance of the WG method, we will omit details of the proof.
\end{proof}

\section{Implementation of WG method}
First, define a bilinear form $a(\cdot,\cdot)$ as
\[
a(u_h,v_h)=(d\nabla_w u_h,\nabla_w v_h)-\kappa^2(u_0,\;v_0)+i\kappa\l u_b,\;v_b\r_{\partial\Omega}.
\]
Then (\ref{WG}) can be rewritten with $v_h=\{v_0,v_b\}$
\begin{equation}\label{wg8}
a(u_h,v_h)=(f, v_0)+\l g,v_b\r_{\partial\Omega}.
\end{equation}

The methodology of implementing the WG  methods is the same as that for continuous
Galerkin finite element methods except that the standard gradient
operator $\nabla$ should be replaced by the discrete weak gradient operator
$\nabla_w$.

In the following, we will use the lowest order weak Galerkin element
($k$=0) on triangles as an example to demonstrate how one might implement the weak
Galerkin finite element method for solving the Helmholtz problem
(\ref{pde}) and (\ref{bc}). Let $N(T)$ and $N(e)$ denote, respectively, the number of
triangles and the number of edges associated with a triangulation
${\cal T}_h$.  Let ${\cal E}_h$ denote the union of the boundaries
of the triangles $T$ of ${\cal T}_h$.

The procedure of implementing the WG method (\ref{WG}) consists of
the following three steps.

\begin{enumerate}
\item Find basis functions for $V_h$ defined in (\ref{vh}):
\begin{eqnarray*}
V_h&=&{\rm span} \{\phi_1,\cdots,\phi_{N(T)},\psi_1,\cdots,\psi_{N(e)}\}
={\rm span} \{\Phi_1,\cdots,\Phi_{n}\}
\end{eqnarray*}
where $n=N(T)+N(e)$ and
\[
\phi_i=\left\{
\begin{array}{l}
  1
  \quad
  \mbox{on} \;\; T_i,
  \\ [0.08in]
  0
  \quad
  \mbox{otherwise},
\end{array}
\right.
\psi_j=\left\{
\begin{array}{l}
  1
  \quad
  \mbox{on} \;\; e_j,
  \\ [0.08in]
  0
  \quad
  \mbox{otherwise},
\end{array}
\right.
\]
for $T_i\in\T_h$ and $e_j\in \E_h$. Please note that $\phi_i$ and $\psi_j$ are defined on whole $\Omega$.\\

\item Substituting $u_h=\sum_{j=1}^{n}\alpha_j\Phi_j$ into (\ref{wg8}) and letting $v=\Phi_i$ in (\ref{wg8}) yield
\begin{equation}\label{sys}
\sum_{j=1}^na(\Phi_j,\Phi_i)\alpha_j=(f, \Phi_i^0)+\l g,\Phi_i^b\r_{\partial\Omega},\quad i=1,\cdots n
 \end{equation}
where $\Phi_i^0$ and $\Phi_i^b$ are the values of $\Phi_i$ on the interior of the triangle and the boundary of the triangle respectively. In our computations, the integrations on the right-hand side
of (\ref{sys}) are conducted numerically. In particular,
a 7-points two-dimensional Gaussian quadrature
and a 3-points one-dimensional Gaussian quadrature are employed, respectively, to calculate
$(f, \Phi_i^0)$ and $\l g,\Phi_i^b\r_{\partial\Omega}$ numerically.\\

\item Form the coefficient matrix $(a(\Phi_j,\Phi_i))_{i,j}$ of the linear system (\ref{sys}) by computing
\begin{equation}\label{bilinear}
a(\Phi_j,\Phi_i)=(d\nabla_w \Phi_j,\nabla_w \Phi_i)-\kappa^2(\Phi_j^0,\;\Phi_i^0)+i\kappa\l \Phi_j^b,\;\Phi_i^b\r_{\partial\Omega}.
\end{equation}
All integrations in (\ref{bilinear}) are carried out analytically.

\end{enumerate}

Finally, we will explain how to compute the weak gradient $\nabla_w$ for a given function $v\in V_h$ when $k=0$.
For a given $T\in\T_h$, we will find  $\nabla_w v\in RT_0(T)$,
\[
RT_0(T)=\left(\begin{array}{c} a+cx \\b+cy\\\end{array}\right)={\rm span}
\{\theta_1,\theta_1,\theta_3\}.
\]
For example, we can choose $\theta_i$ as follows
\[
\theta_1=\left(\begin{array}{c}1 \\0 \\\end{array}\right), \theta_2=\left(\begin{array}{c}0 \\1 \\\end{array}\right),
\theta_3=\left(\begin{array}{c}x \\y \\\end{array}\right).
\]
Thus on each element $T\in {\cal T}_h$, $\nabla_w v=\sum_{j=1}^3c_j\theta_j$. Using the definition of the discrete weak gradient (\ref{d-g}), we find  $c_j$ by solving the following linear system:
\[
\left(
  \begin{array}{ccc}
    (\theta_1, \theta_1)_T & (\theta_2, \theta_1)_T & (\theta_3, \theta_1)_T \\
    (\theta_1, \theta_2)_T& (\theta_2, \theta_2)_T& (\theta_3, \theta_2)_T \\
    (\theta_1, \theta_3)_T& (\theta_2, \theta_3)_T& (\theta_3, \theta_3)_T\\
  \end{array}
\right)
\left(\begin{array}{c}c_1 \\c_2 \\c_3 \\\end{array}\right)=\left(\begin{array}{c}
-(v_0,\nabla\cdot\theta_1)_T+\l v_b,\;\theta_1\cdot\bn\r_{\partial T} \\
 -(v_0,\nabla\cdot\theta_2)_T+\l v_b,\;\theta_2\cdot\bn\r_{\partial T}\\
 -(v_0,\nabla\cdot\theta_3)_T+\l v_b,\;\theta_3\cdot\bn\r_{\partial T}
 \end{array}
 \right).
 \]
The inverse of the above coefficient matrix can be obtained
explicitly or numerically through a local matrix solver. For the basis function $\Phi_i$, $\nabla_w\Phi_i$ is nonzero on only  one or two triangles.

\section{Numerical Experiments}
In this section, we examine the WG method by testing its accuracy,
convergence, and robustness for solving two dimensional Helmholtz
equations. The pollution effect due to large wave numbers will be
particularly investigated and tested numerically. For convergence
tests, both piecewise constant and piecewise linear finite elements
will be considered. To demonstrate the robustness of the WG method, the Helmholtz
equation in both homogeneous and inhomogeneous media will be solved
on convex and non-convex computational domains.
The mesh generation and all computations are conducted in the MATLAB
environment.
For simplicity, a
structured triangular mesh is employed in all cases, even though the WG method
is known to be very flexible in dealing with various different finite
element partitions \cite{mwy-PolyRedu,mwy-biharmonic}.

Two types of relative errors are measured in our numerical
experiments. The first one is the relative $L^2$ error
defined by
$$ \frac{\| u_h - Q_h u \|}{\| Q_h u \|}.$$
The second one is the relative $H^1$ error
defined in terms of the discrete gradient
$$ \frac{\| \nabla_w (u_h - Q_h u) \|}{\| \nabla_w Q_h u \|}.$$
Numerically, the $H^1$-semi-norm will be calculated
as
$$
\3bar u_h-Q_hu \3bar^2=h^{-1}\langle
u_0-u_b-(Q_0u-Q_bu),u_0-u_b-(Q_0u-Q_bu)\rangle_{\partial \Omega}
$$
for the lowest order finite element (i.e., piecewise constants). For
piecewise linear elements, we use the original definition of
$\nabla_w$ to compute the $H^1$-semi-norm $\|\nabla_w (u_h - Q_h
u)\|$.

\begin{figure}[!tb]
\centering
\begin{tabular}{cc}
  \resizebox{2.25in}{2.25in}{\includegraphics{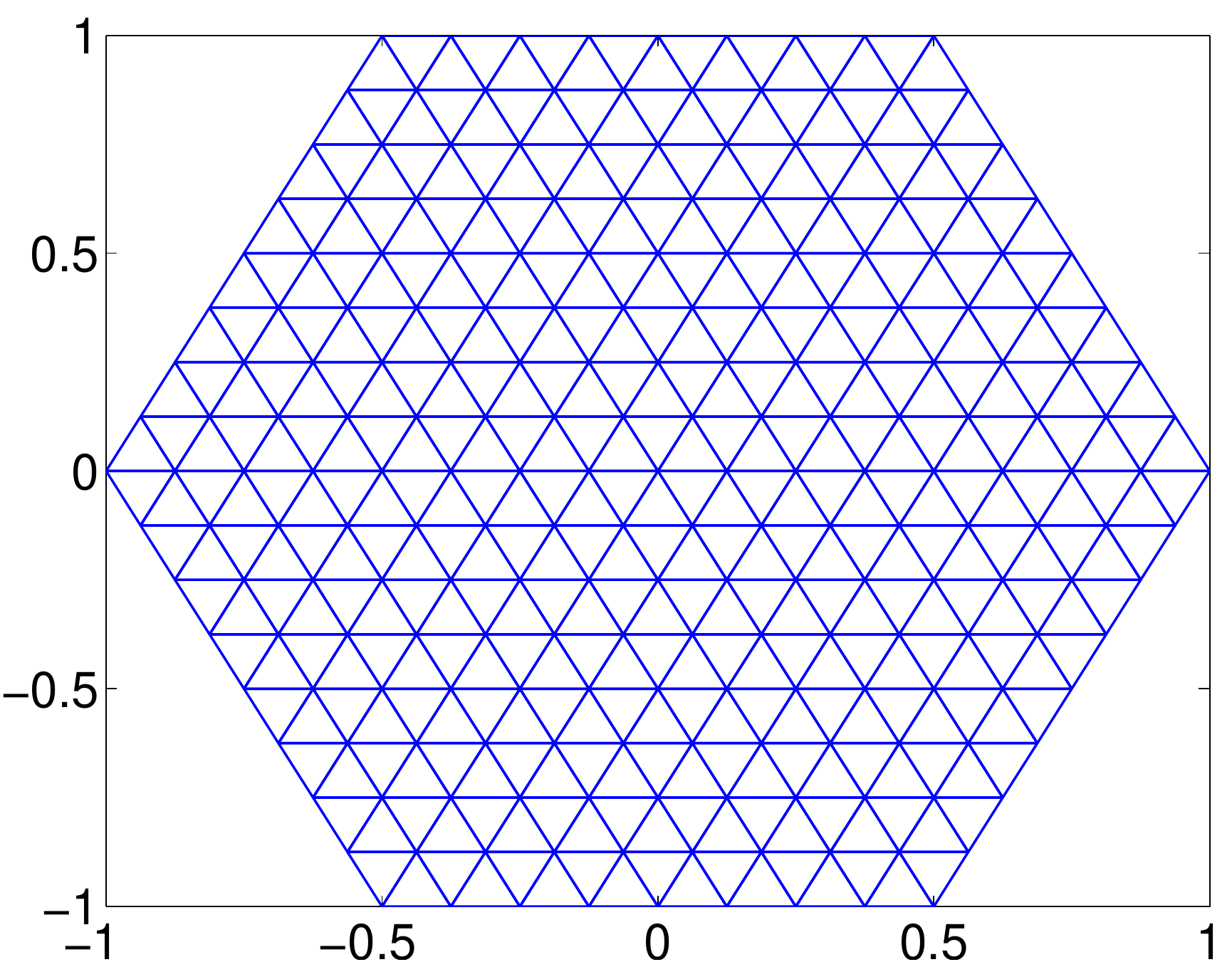}} \quad
  \resizebox{2.25in}{2.25in}{\includegraphics{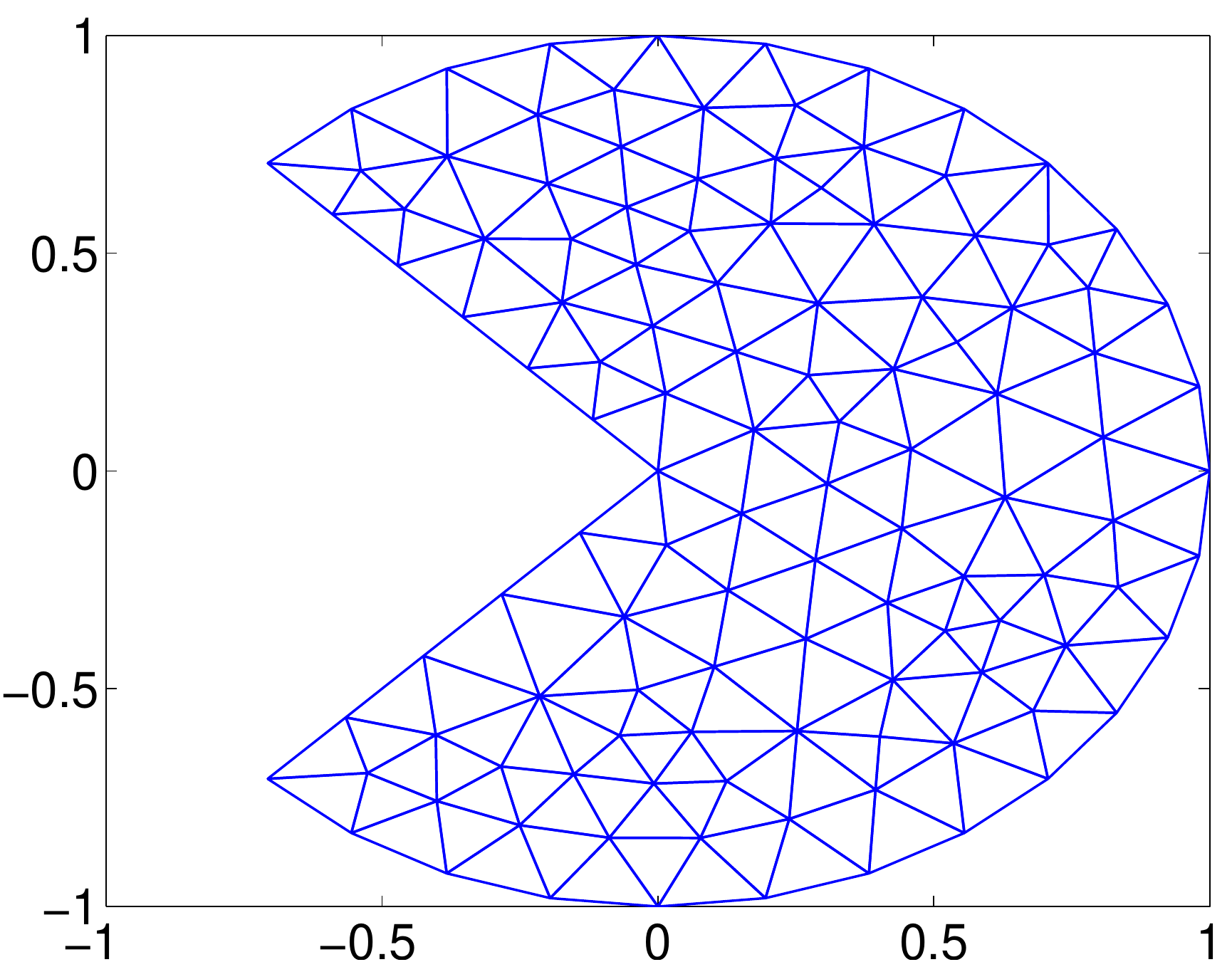}}
\end{tabular}
\caption{Geometry of testing domains and sample meshes. Left: a
convex hexagon domain; Right: a non-convex imperfect circular
domain.} \label{fig.domain}
\end{figure}

\subsection{A convex Helmholtz problem}\label{convex}
We first consider a homogeneous Helmholtz equation defined on a
convex hexagon domain, which has been studied in \cite{fw}. The
domain $\Omega$ is the unit regular hexagon domain centered at the
origin $(0,0)$, see Fig. \ref{fig.domain} (left). Here we set $d=1$
and $f=\sin(kr)/r$ in (\ref{pde}), where $r=\sqrt{x^2+y^2}$. The
boundary data $g$ in the Robin boundary condition (\ref{bc}) is
chosen so that the exact solution is given by
\begin{eqnarray}
u=\frac{\cos(kr)}{k}-\frac{\cos k+i\sin k}{k(J_0(k)+iJ_1(k))}J_0(kr)
\end{eqnarray}
where $J_{\xi}(z)$ are Bessel functions of the first kind. Let
$\T_h$ denote the regular triangulation that consists of $6N^2$
triangles of size $h=1/N$, as shown in Fig. \ref{fig.domain} (left)
for $T_{\frac18}$.

\begin{table}[!t]
\caption{Convergence of piecewise constant WG for the Helmholtz equation on a
convex domain with wave number $k=1$.} \label{table.Ex1k1}
\begin{center}
\begin{tabular}{|l|l|l|l|l|}
\hline
& \multicolumn{2}{c|}{relative $H^1$} & \multicolumn{2}{c|}{relative $L^2$} \\
\cline{2-3} \cline{4-5}
$h$ & error & order & error & order \\
\hline
  5.00e-01 &  2.49e-02 &      &  4.17e-03 &\\
  2.50e-01 &  1.11e-02 & 1.16 &  1.05e-03 & 1.99 \\
  1.25e-01 &  5.38e-03 & 1.05 &  2.63e-04 & 2.00 \\
  6.25e-02 &  2.67e-03 & 1.01 &  6.58e-05 & 2.00 \\
  3.13e-02 &  1.33e-03 & 1.00 &  1.64e-05 & 2.00 \\
  1.56e-02 &  6.65e-04 & 1.00 &  4.11e-06 & 2.00 \\
\hline
\end{tabular}
\end{center}
\end{table}

Table \ref{table.Ex1k1} illustrates the performance of the WG method
with piecewise constant elements for the Helmholtz equation with
wave number $k=1$. Uniform triangular partitions were used in the
computation through successive mesh refinements. The relative errors
in $L^2$ norm and $H^1$ semi-norm can be seen in Table
\ref{table.Ex1k1}. The Table also includes numerical estimates for
the rate of convergence in each metric. It can be seen that the
order of convergence in the relative $H^1$ semi-norm and relative
$L^2$ norm are, respectively, one and two for piecewise constant
elements.

\begin{table}[!t]
\caption{ Convergence of piecewise linear WG for the Helmholtz equation on a
convex domain with wave number $k=5$.} \label{table.Ex1k5}
\begin{center}
\begin{tabular}{|l|l|l|l|l|}
\hline
& \multicolumn{2}{c|}{relative $H^1$} & \multicolumn{2}{c|}{relative $L^2$} \\
\cline{2-3} \cline{4-5}
$h$ & error & order & error & order \\
\hline
  2.50e-01 &  9.48e-03 &      & 2.58e-04 & \\
  1.25e-01 &  2.31e-03 & 2.04 & 3.46e-05 & 2.90 \\
  6.25e-02 &  5.74e-04 & 2.01 & 4.47e-06 & 2.95 \\
  3.13e-02 &  1.43e-04 & 2.00 & 5.64e-07 & 2.99 \\
  1.56e-02 &  3.58e-05 & 2.00 & 7.06e-08 & 3.00 \\
  7.81e-03 &  8.96e-06 & 2.00 & 8.79e-09 & 3.01 \\
\hline
\end{tabular}
\end{center}
\end{table}

High order of convergence can be achieved by using corresponding
high order finite elements in the present WG framework. To
demonstrate this phenomena, we consider the same Helmholtz problem
with a slightly larger wave number $k=5$. The WG with piecewise
linear functions was employed in the numerical approximation. The
computational results are reported in Table \ref{table.Ex1k5}. It is
clear that the numerical experiment validates the theoretical
estimates. More precisely, the rates of convergence in the relative
$H^1$ semi-norm and relative $L^2$ norm are given by two and three,
respectively.

\subsection{A non-convex Helmholtz problem}

We next explore the use of the WG method for solving a Helmholtz
problem defined on a non-convex domain, see Fig. \ref{fig.domain}
(right). The medium is still assumed to be homogeneous, i.e., $d=1$
in (\ref{pde}). We are particularly interested in the performance of
the WG method for dealing with the possible field singularity at the
origin. For simplicity, only the piecewise constant $RT_0$ elements
are tested for the present problem. Following \cite{Monk11}, we take
$f=0$ in (\ref{pde}) and the boundary condition is simply taken as a
Dirichlet one: $u=g$ on $\partial \Omega$. Here $g$ is prescribed
according to the exact solution \cite{Monk11}
\begin{equation}\label{solution2}
u= J_{\xi}( k \sqrt{x^2+y^2}) \cos (\xi \arctan( y/x)).
\end{equation}

\begin{figure}[!tb]
\centering
\begin{tabular}{cc}
  \resizebox{2.55in}{2.1in}{\includegraphics{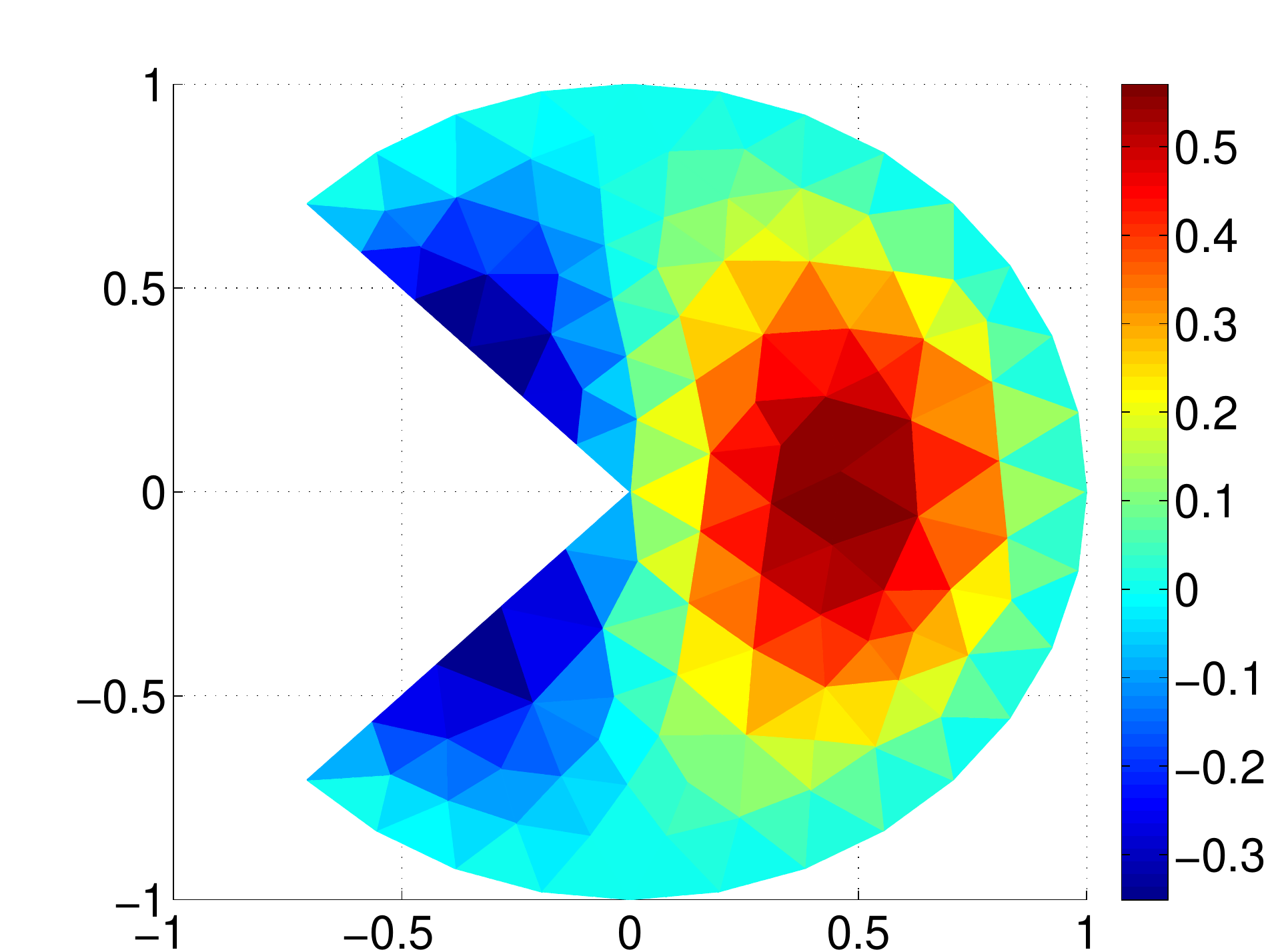}} \quad
  \resizebox{2.45in}{2.1in}{\includegraphics{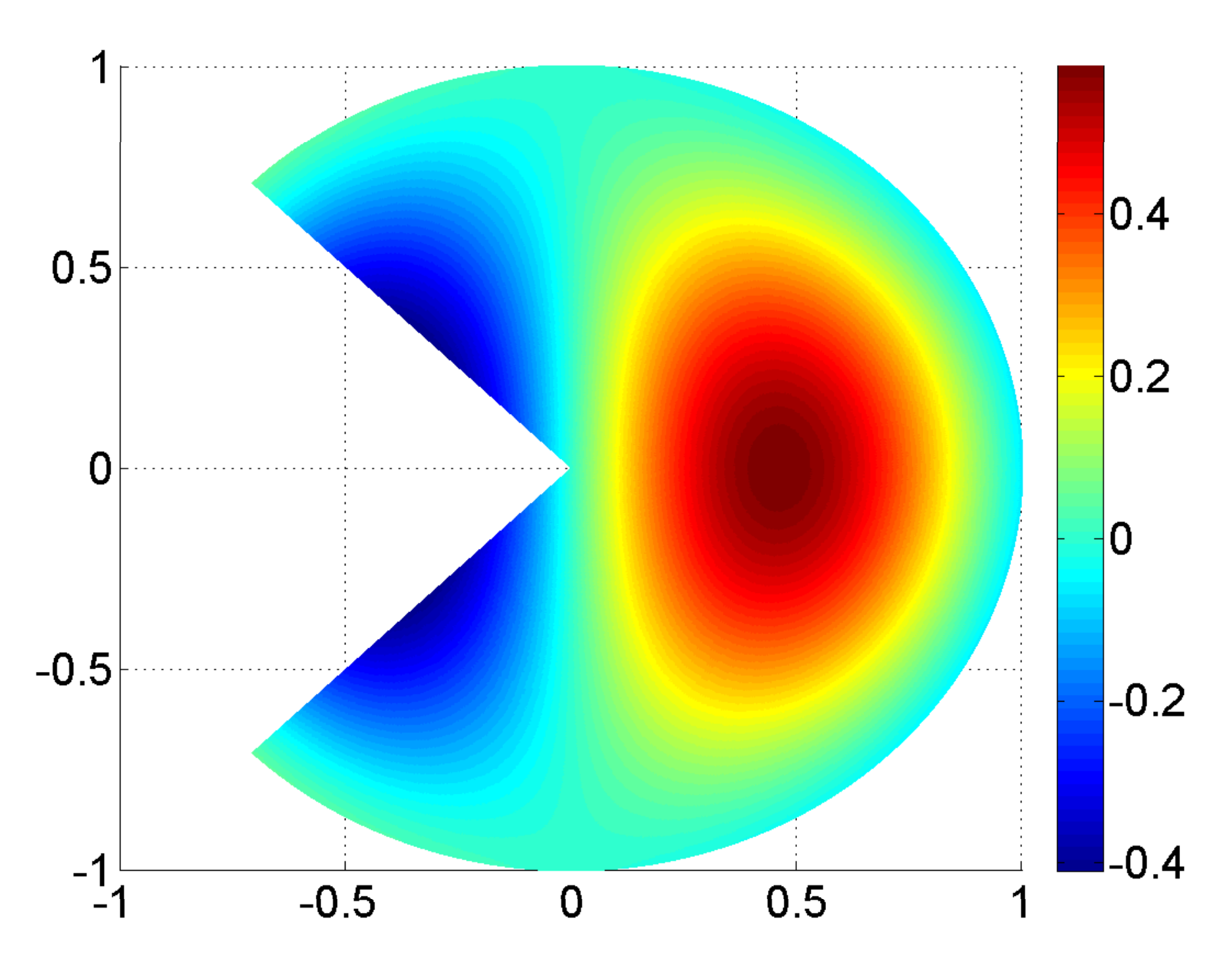}}
\end{tabular}
\caption{WG solutions for the non-convex Helmholtz problem with
$k=4$ and $\xi=1$. Left: Mesh level $1$; Right: Mesh level $6$.}
\label{fig.Ex2xi1}
\end{figure}

\begin{figure}[!tb]
\centering
\begin{tabular}{cc}
  \resizebox{2.55in}{2.1in}{\includegraphics{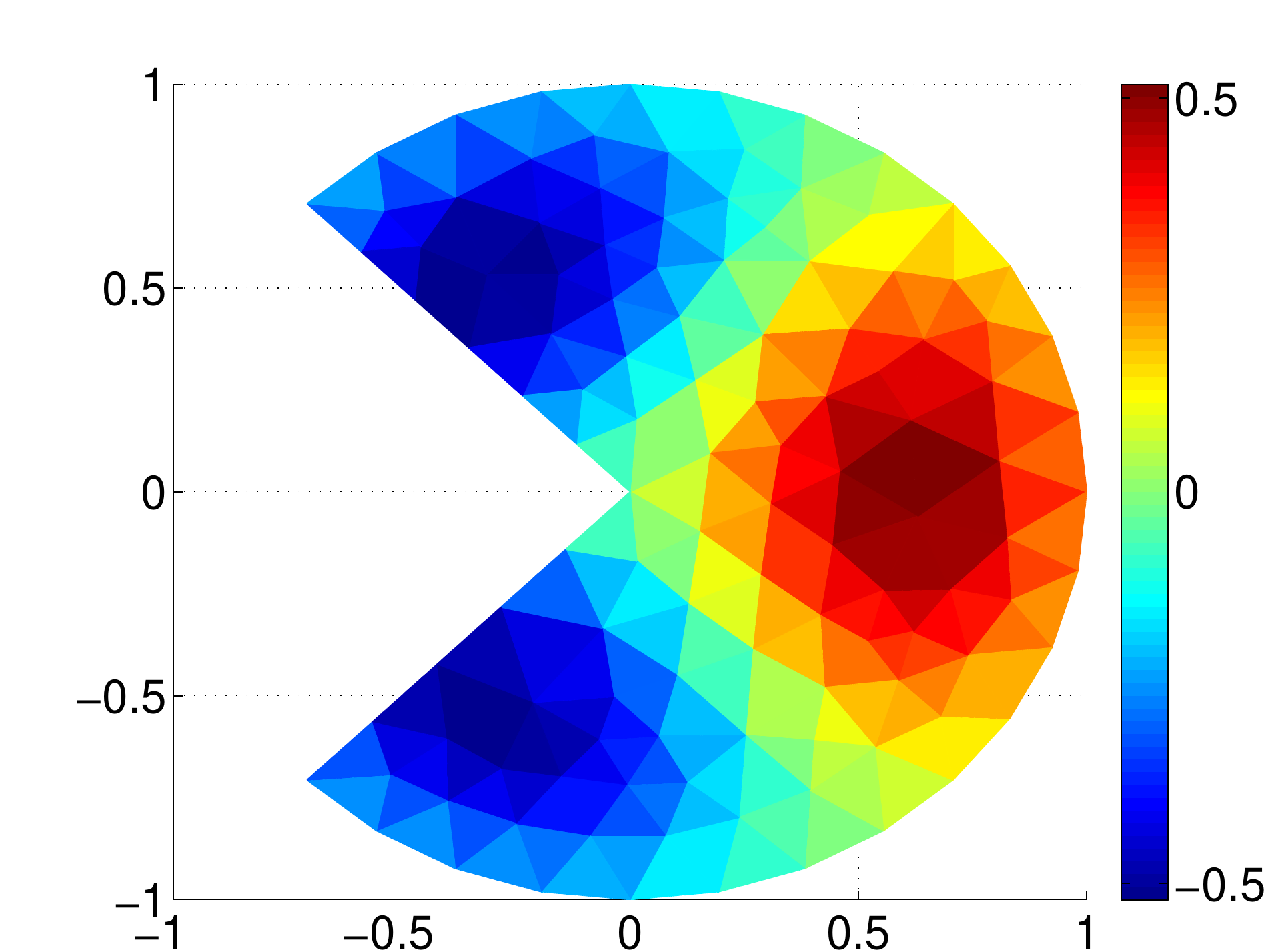}} \quad
  \resizebox{2.45in}{2.1in}{\includegraphics{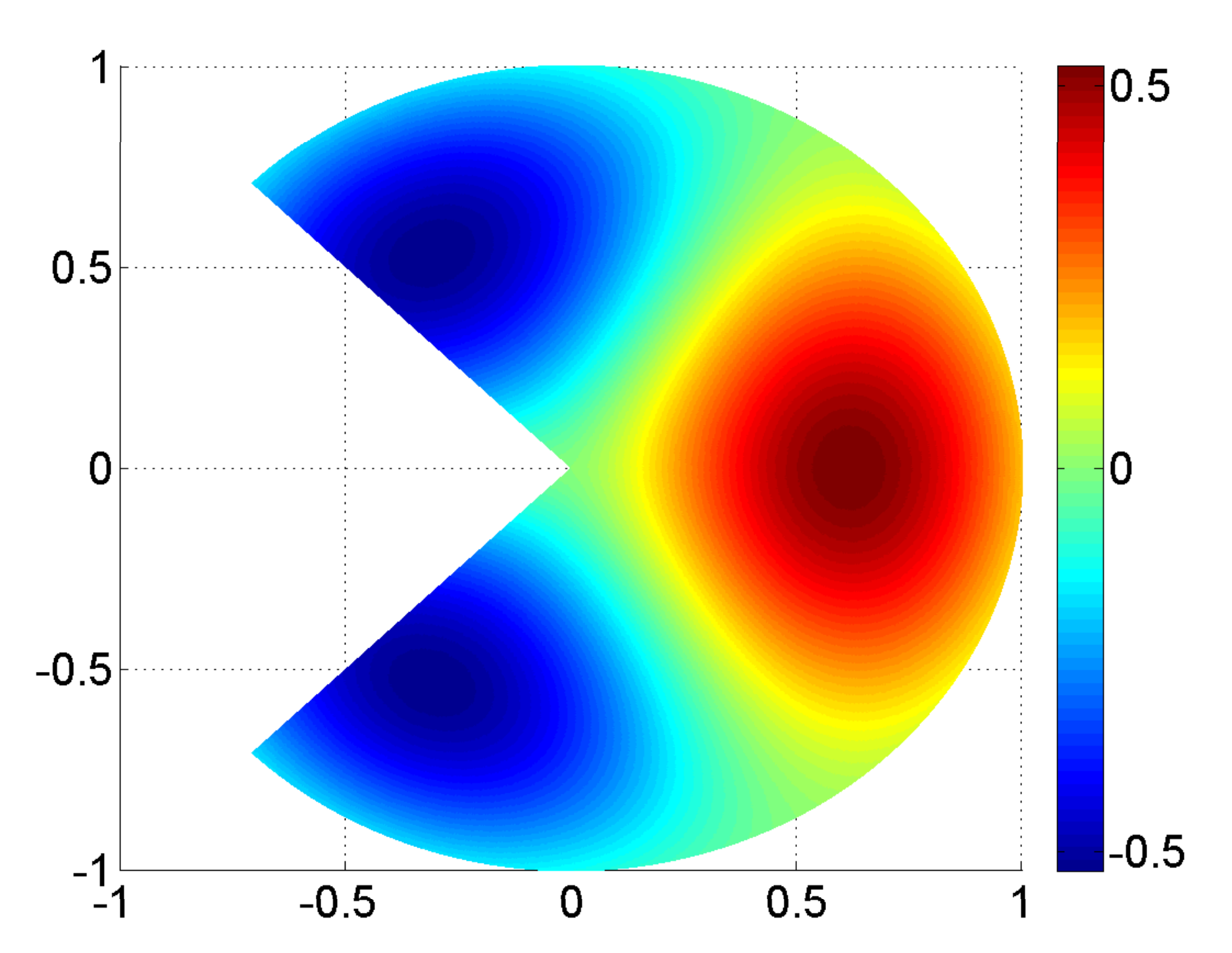}}
\end{tabular}
\caption{WG solutions for the non-convex Helmholtz problem with
$k=4$ and $\xi=3/2$. Left: Mesh level $1$; Right: Mesh level $6$.}
\label{fig.Ex2xi32}
\end{figure}

\begin{figure}[!tb]
\centering
\begin{tabular}{cc}
  \resizebox{2.55in}{2.1in}{\includegraphics{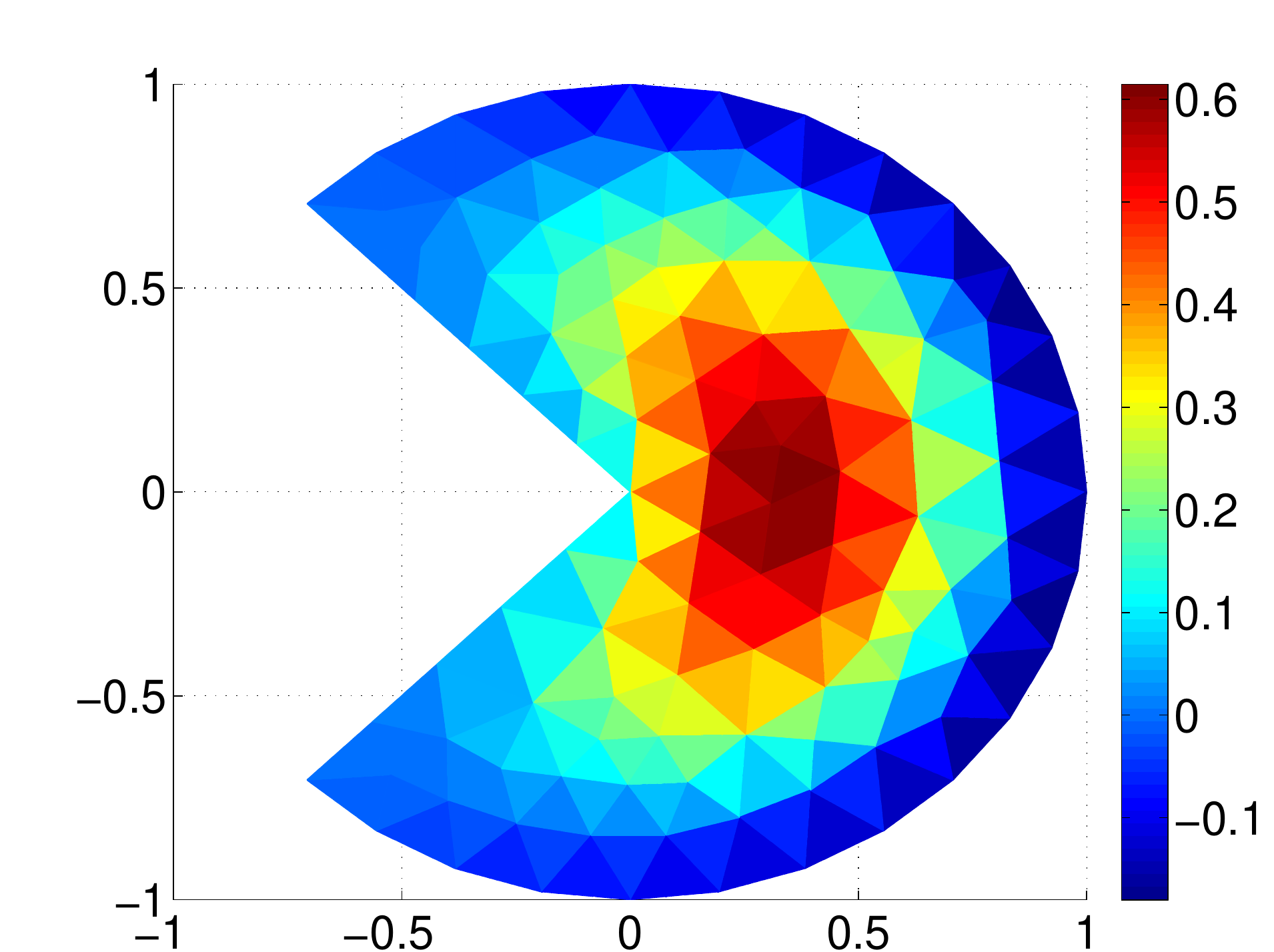}}
  \resizebox{2.45in}{2.1in}{\includegraphics{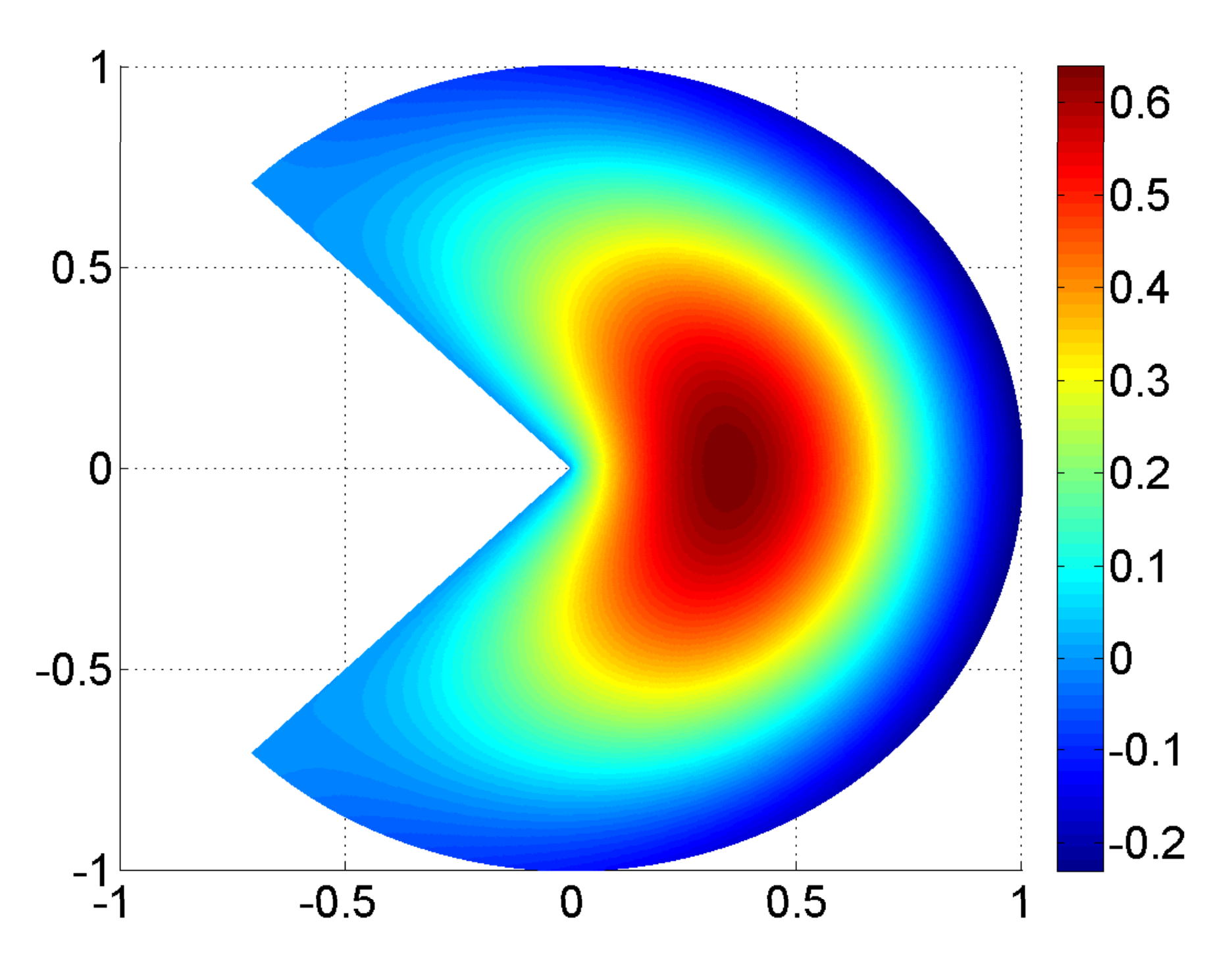}}
\end{tabular}
\caption{WG solutions for the non-convex Helmholtz problem with
$k=4$ and $\xi=2/3$. Left: Mesh level $1$; Right: Mesh level $6$.}
\label{fig.Ex2xi23}
\end{figure}

In the present study, the wave number was chosen as $k=4$ and three
values for the parameter $\xi$ are considered; i.e., $\xi=1$,
$\xi=3/2$ and $\xi=2/3$. The same triangular mesh is used in the WG
method for all three cases. In particular, an initial mesh is first
generated by using MATLAB with default settings, see Fig.
\ref{fig.domain} (right). Next, the mesh is refined uniformly for
five times. The WG solutions on mesh level $1$ and mesh level $6$
are shown in Fig. \ref{fig.Ex2xi1}, Fig. \ref{fig.Ex2xi32}, and Fig.
\ref{fig.Ex2xi23}, respectively, for $\xi=1$, $\xi=3/2$, and
$\xi=2/3$. Since the numerical errors are quite small for the WG
approximation corresponding to mesh level $6$, the field modes
generated by the densest mesh are visually indistinguishable from
the analytical ones. In other words, the results shown in the right
charts of Fig. \ref{fig.Ex2xi1}, Fig. \ref{fig.Ex2xi32}, and Fig.
\ref{fig.Ex2xi23} can be regarded as analytical results. It can be
seen that in all three cases, the WG solutions already agree with
the analytical ones at the coarsest level. Moreover, based on the
coarsest mesh, the constant function values can be clearly seen in
each triangle, due to the use of piecewise constant $RT_0$ elements.
Nevertheless, after the initial mesh is refined for five times, the
numerical plots shown in the right charts are very smooth. A perfect
symmetry with respect to the $x$-axis is clearly seen.

\begin{table}[!tb]
\caption{Numerical convergence test for the non-convex Helmholtz
problem with $k=4$ and $\xi=1$. } \label{table.Ex2xi1}
\begin{center}
\begin{tabular}{|l|l|l|l|l|}
\hline
& \multicolumn{2}{c|}{relative $H^1$} & \multicolumn{2}{c|}{relative $L^2$} \\
\cline{2-3} \cline{4-5}
$h$ & error & order & error & order \\
\hline
  2.44e-01 & 5.64e-02 &      & 1.37e-02 & \\
  1.22e-01 & 2.83e-02 & 1.00 & 3.56e-03 & 1.95 \\
  6.10e-02 & 1.42e-02 & 0.99 & 8.98e-04 & 1.99 \\
  3.05e-02 & 7.14e-03 & 1.00 & 2.25e-04 & 2.00 \\
  1.53e-02 & 3.57e-03 & 1.00 & 5.63e-05 & 2.00 \\
  7.63e-03 & 1.79e-03 & 1.00 & 1.41e-05 & 2.00 \\
\hline
\end{tabular}
\end{center}
\end{table}

\begin{table}[!tb]
\caption{Numerical convergence test for the non-convex Helmholtz
problem with $k=4$ and $\xi=3/2$. } \label{table.Ex2xi32}
\begin{center}
\begin{tabular}{|l|l|l|l|l|}
\hline
& \multicolumn{2}{c|}{relative $H^1$} & \multicolumn{2}{c|}{relative $L^2$} \\
\cline{2-3} \cline{4-5}
$h$ & error & order & error & order \\
\hline
  2.44e-01   & 5.56e-02 & & 1.12e-2& \\
  1.22e-01   & 2.81e-02 & 0.98 & 3.02e-03 & 1.89 \\
  6.10e-02   & 1.42e-02 & 0.99 & 8.06e-04 & 1.91 \\
  3.05e-02   & 7.14e-03 & 0.99 & 2.12e-04 & 1.92 \\
  1.53e-02   & 3.58e-03 & 1.00 & 5.54e-05 & 1.94 \\
  7.63e-03   & 1.79e-03 & 1.00 & 1.44e-05 & 1.95 \\
\hline
\end{tabular}
\end{center}
\end{table}

\begin{table}[!tb]
\caption{Numerical convergence test for the non-convex Helmholtz
problem with $k=4$ and $\xi=2/3$. } \label{table.Ex2xi23}
\begin{center}
\begin{tabular}{|l|l|l|l|l|}
\hline
& \multicolumn{2}{c|}{relative $H^1$} & \multicolumn{2}{c|}{relative $L^2$} \\
\cline{2-3} \cline{4-5}
$h$ & error & order & error & order \\
\hline
   2.44e-01   & 1.07e-01 &       &5.24e-02 & \\
   1.22e-01   & 5.74e-02 & 0.90  &2.18e-02 & 1.27 \\
   6.10e-02   & 3.23e-02 & 0.83  &9.01e-03 & 1.27 \\
   3.05e-02   & 1.89e-02 & 0.77  &3.68e-03 & 1.29 \\
   1.53e-02   & 1.14e-02 & 0.73  &1.49e-03 & 1.31 \\
   7.63e-03   & 6.99e-03 & 0.71  &5.96e-04 & 1.32 \\
\hline
\end{tabular}
\end{center}
\end{table}

We next investigate the numerical convergence rates for WG. The
numerical errors of the WG solutions for $\xi=1$, $\xi=3/2$ and
$\xi=2/3$ are listed, respectively, in Table \ref{table.Ex2xi1},
Table \ref{table.Ex2xi32}, and Table \ref{table.Ex2xi23}. It can be
seen that for $\xi=1$ and $\xi=3/2$, the numerical convergence rates
in the relative $H^1$ and $L^2$ errors remain to be first and second
order, while the convergence orders degrade for the non-smooth case
$\xi=2/3$. Mathematically, for both  $\xi=3/2$ and $\xi=2/3$, the
exact solutions (\ref{solution2}) are known to be non-smooth across
the negative $x$-axis if the domain was chosen to be the entire
circle. However, the present domain excludes the negative $x$-axis.
Thus, the source term $f$ of the Helmholtz equation (\ref{pde}) can
be simply defined as zero throughout $\Omega$. Nevertheless, there
still exists some singularities at the origin $(0,0)$. In
particular, it is remarked in \cite{Monk11} that the singularity
lies in the derivatives of the exact solution at $(0,0)$. Due to
such singularities, the convergence rates of high order
discontinuous Galerkin methods are also reduced for $\xi=3/2$ and
$\xi=2/3$ \cite{Monk11}. In the present study, we further note that
there exists a subtle difference between two cases $\xi=3/2$ and
$\xi=2/3$ at the origin. To see this, we neglect the second
$\cos(\cdot)$ term in the exact solution (\ref{solution2}) and plot
the Bessel function of the first kind $J_{\xi}( k |r|)$ along the
radial direction $r$, see Fig. \ref{fig.origin}. It is observed that
the Bessel function of the first kind is non-smooth for the case
$\xi=2/3$, while it looks smooth across the origin for the case
$\xi=3/2$. Thus, it seems that the first derivative of $J_{3/2}( k
|r|)$ is still continuous along the radial direction. This perhaps
explains why the present WG method does not experience any order
reduction for the case $\xi=3/2$. In  \cite{Monk11}, locally refined
meshes were employed to resolve the singularity at the origin so
that the convergence rate for the case $\xi=2/3$ can be improved. We
note that local refinements can also be adopted in the WG method for
a better convergence rate. A study of WG with grid local refinement
is left to interested parties for future research.

\begin{figure}[!tb]
\centering \resizebox{2.55in}{2.1in}{\includegraphics{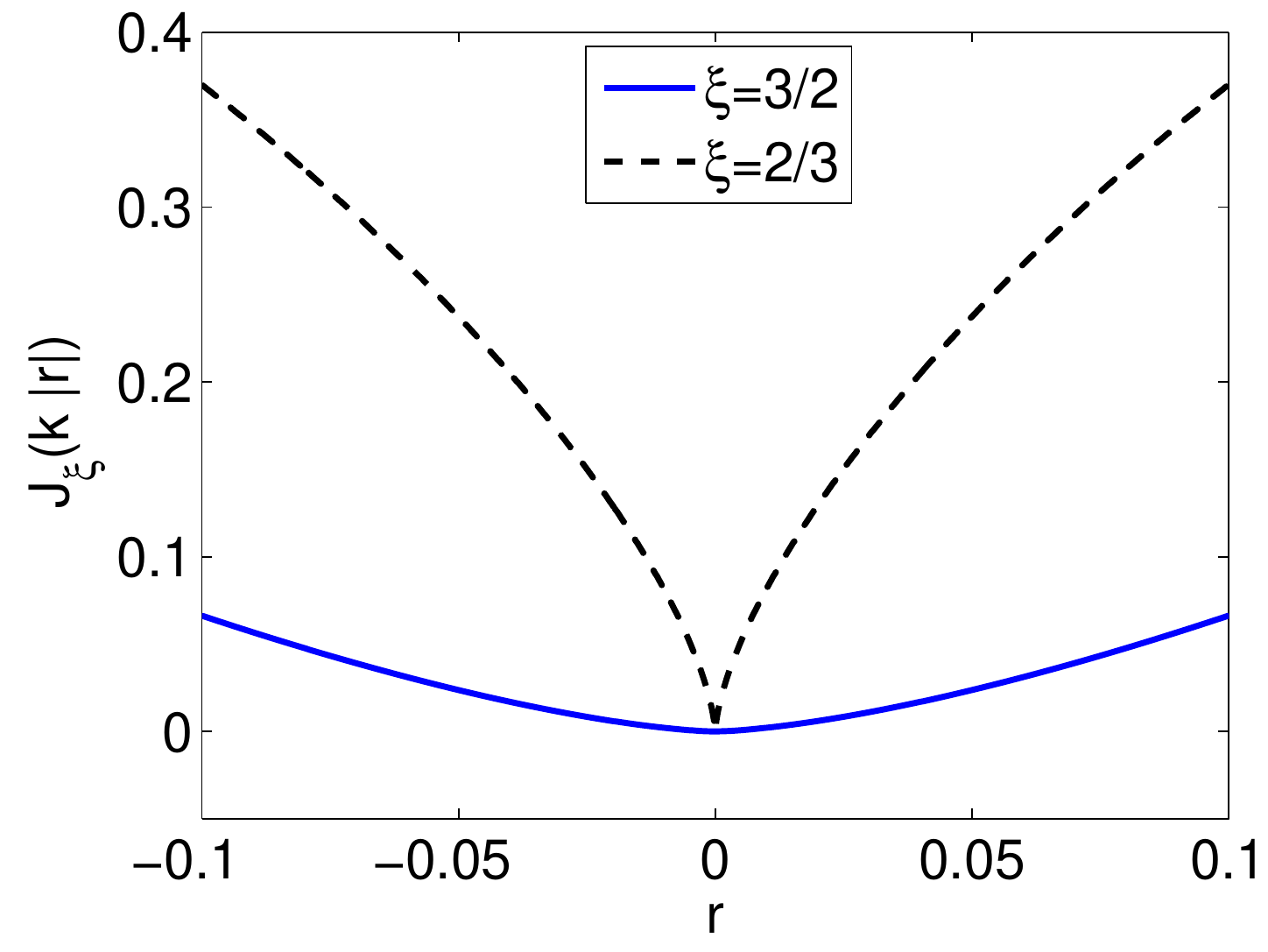}}
\caption{The Bessel function of the first kind $J_{\xi}( k |r|)$
across the origin.} \label{fig.origin}
\end{figure}

\subsection{A Helmholtz problem with inhomogeneous media}

We consider a Helmholtz problem with inhomogeneous media defined on
a circular domain with radius $R$. Note that the spatial function
$d(x,y)$ in the Helmholtz equation (\ref{pde}) represents the
dielectric properties of the underlying media. In particular, we
have $d=\frac{1}{\epsilon}$ in the electromagnetic applications
\cite{Zhao10}, where $\epsilon$ is the electric permittivity. In the
present study, we construct a smooth varying dielectric profile:
\begin{equation}\label{dr}
d(r)= \frac{1}{\epsilon_1}S(r) + \frac{1}{\epsilon_2}(1-S(r)),
\end{equation}
where $r=\sqrt{x^2+y^2}$, $\epsilon_1$ and $\epsilon_2$ are
dielectric constants, and
\begin{equation}
S(r)=
\begin{cases}
1 & \text{if $r<a$}, \\
-2\left( \frac{b-r}{b-a} \right)^3 +
 3\left( \frac{b-r}{b-a} \right)^2 & \text{if $a \le r \le b$}, \\
0 & \text{if $r>b$},
\end{cases}
\end{equation}
with $a<b<R$. An example plot of $d(r)$ and $S(r)$ is shown in Fig.
\ref{fig.eps}. In classical electromagnetic simulations, $\epsilon$
is usually taken as a piecewise constant, so that  some
sophisticated numerical treatments have to be conducted near the
material interfaces to secure the overall accuracy \cite{Zhao10}.
Such a procedure can be bypassed if one considers a smeared
dielectric profile, such as (\ref{dr}). We note that under the limit
$b \to a$, a piecewise constant profile is recovered in (\ref{dr}).
In general, the smeared profile (\ref{dr}) might be generated via
numerical filtering, such as the so-called $\epsilon$-smoothing
technique \cite{Shao03} in computational electromagnetics. On the
other hand, we note that the dielectric profile might be defined to
be smooth in certain applications. For example, in studying the
solute-solvent interactions of electrostatic analysis, some
mathematical models \cite{Chen10,Zhao11} have been proposed to treat
the boundary between the protein and its surrounding aqueous
environment to be a smoothly varying one. In fact, the definition of
(\ref{dr}) is inspired by a similar model in that field
\cite{Chen10}.

\begin{figure}[!tb]\label{fig.eps}
\centering \resizebox{2.55in}{2.1in}{\includegraphics{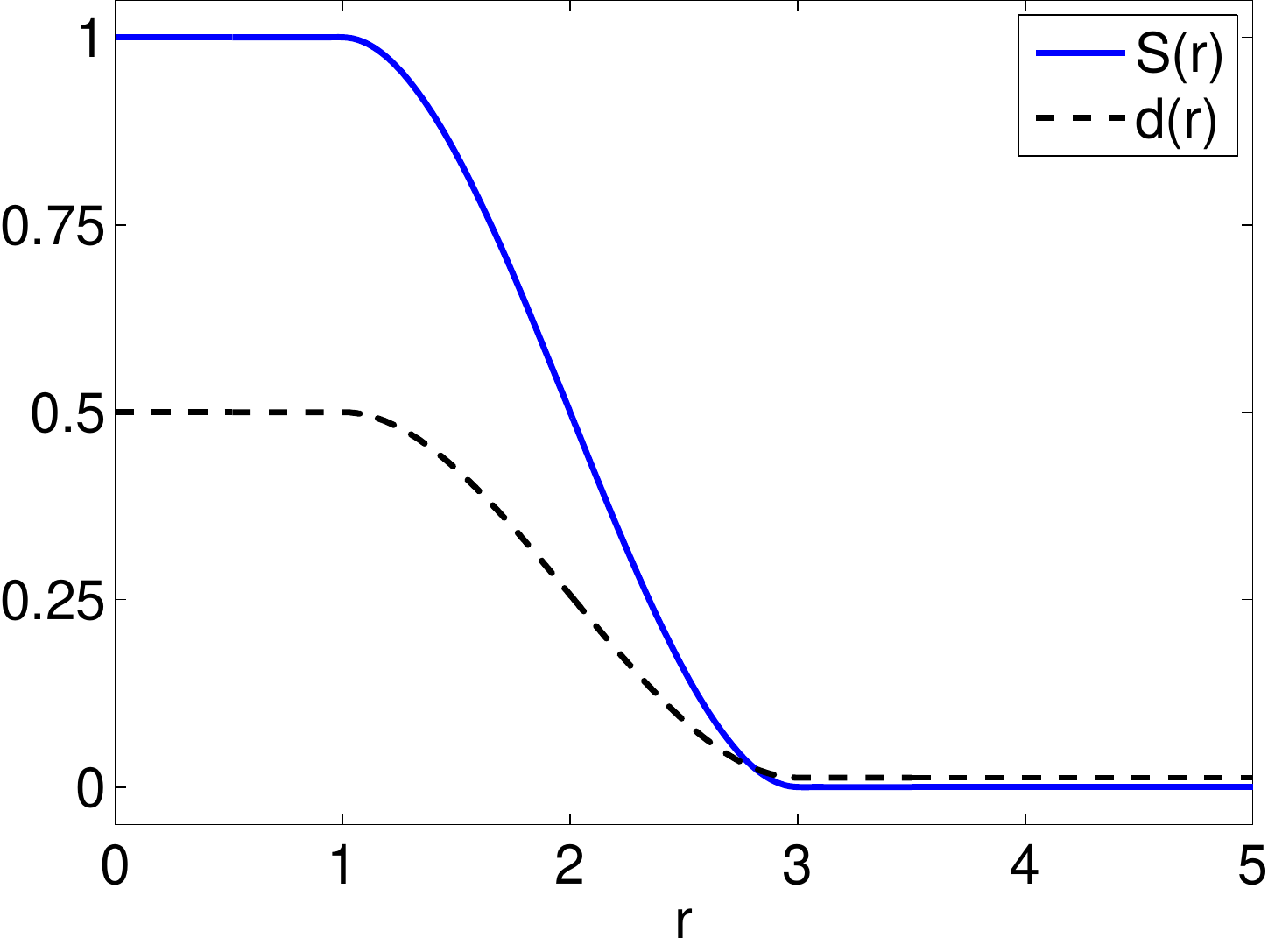}}
\caption{An example plot of smooth dielectric profile $d(r)$ and
$S(r)$ with $a=1$, $b=3$ and $R=5$. The dielectric coefficients of
protein and water are used, i.e., $\epsilon_1=2$ and
$\epsilon_2=80$. }
\end{figure}

In the present study, we choose the source of the Helmholtz equation
(\ref{pde}) to be
\begin{equation}
f(r)=\kappa^2 [d(r)-1] J_0 (k r) + k d'(r) J_1(kr),
\end{equation}
where
\begin{equation}
d'(r)=\left( \frac{1}{\epsilon_1}-\frac{1}{\epsilon_2} \right) S'(r)
\end{equation}
and
\begin{equation}
S'(r)=
\begin{cases}
0 & \text{if $r<a$}, \\
6\left( \frac{b-r}{b-a} \right)^2 -
6\left( \frac{b-r}{b-a} \right) & \text{if $a \le r \le b$}, \\
0 & \text{if $b<r$},
\end{cases}
\end{equation}
For simplicity, a Dirichlet boundary condition is imposed at $r=R$
with $u=g$. Here $g$ is prescribed according to the exact solution
\begin{equation}
u=J_0(kr).
\end{equation}

Our numerical investigation assumes the value of $a=1$, $b=3$ and
$R=5$. The wave number is set to be $k=2$. The dielectric
coefficients are chosen as $\epsilon_1=2$ and $\epsilon_2=80$, which
represents the dielectric constant of protein and water
\cite{Chen10,Zhao11}, respectively. The WG method with piecewise
constant finite element functions is employed to solve the present
problem with inhomogeneous media in Cartesian coordinate. Table
\ref{table.Ex3} illustrates the computational errors and some
numerical rate of convergence. It can be seen that the numerical
convergence in the relative $L^2$ error is not uniform, while the
relative $H^1$ error still converges uniformly in first order. This
phenomena might be related to the non-uniformity and smallness of
the media in part of the computational domain.
In particular, we note that
the relative $L^2$ error for the coarsest grid is extremely large, such that
the numerical order for the first mesh refinement is unusually high.
To be fair, we thus exclude this data in our analysis.
To have an idea about the overall numerical order of this non-uniform
convergence, we calculated the average convergence rate and least-square fitted
convergence rate for the rest mesh refinements,
which are $1.97$ and $1.88$, respectively.
Thus,
the present inhomogeneous example demonstrates the accuracy and robustness
of the WG method for the Helmholtz equation.

\begin{table}[!ht]
\caption{Numerical convergence test of the Helmholtz equation with
inhomogeneous media. } \label{table.Ex3}
\begin{center}
\begin{tabular}{|l|l|l|l|l|}
\hline
& \multicolumn{2}{c|}{relative $H^1$} & \multicolumn{2}{c|}{relative $L^2$} \\
\cline{2-3} \cline{4-5}
$h$ & error & order & error & order \\
\hline
   1.51e-00   & 2.20e-01 &       & 1.04e-00 &  \\
   7.54e-01   & 1.24e-01 & 0.83  & 1.20e-01 & 3.11 \\
   3.77e-01   & 6.24e-02 & 0.99  & 1.81e-02 & 2.73 \\
   1.88e-01   & 3.13e-02 & 1.00  & 5.71e-03 & 1.67 \\
   9.42e-02   & 1.56e-02 & 1.00  & 2.14e-03 & 1.42 \\
   4.71e-02   & 7.82e-03 & 1.00  & 5.11e-04 & 2.06 \\
\hline
\end{tabular}
\end{center}
\end{table}

\subsection{Large wave numbers}
We finally investigate the performance of the WG method for the
Helmholtz equation with large wave numbers.
As discussed above, without resorting to high order generalizations or
analytical/special treatments, we will examine the use of
the plain WG method for tackling the pollution effect.
The homogeneous
Helmholtz problem of the Subsection \ref{convex}
will be studied again. Also, the $RT_0$ and $RT_1$ elements are
used to solve the homogeneous Helmholtz equation with the Robin
boundary condition. Since this problem is defined on a structured
hexagon domain, a uniform triangular mesh with a constant mesh size
$h$ throughout the domain is used. This enables us to precisely
evaluate the impact of the mesh refinements. Following the
literature works \cite{bao04,fw}, we will focus only on the relative
$H^1$ semi-norm in the present study.

\begin{figure}[!ht]
\centering
\begin{tabular}{cc}
  \resizebox{2.45in}{2.15in}{\includegraphics{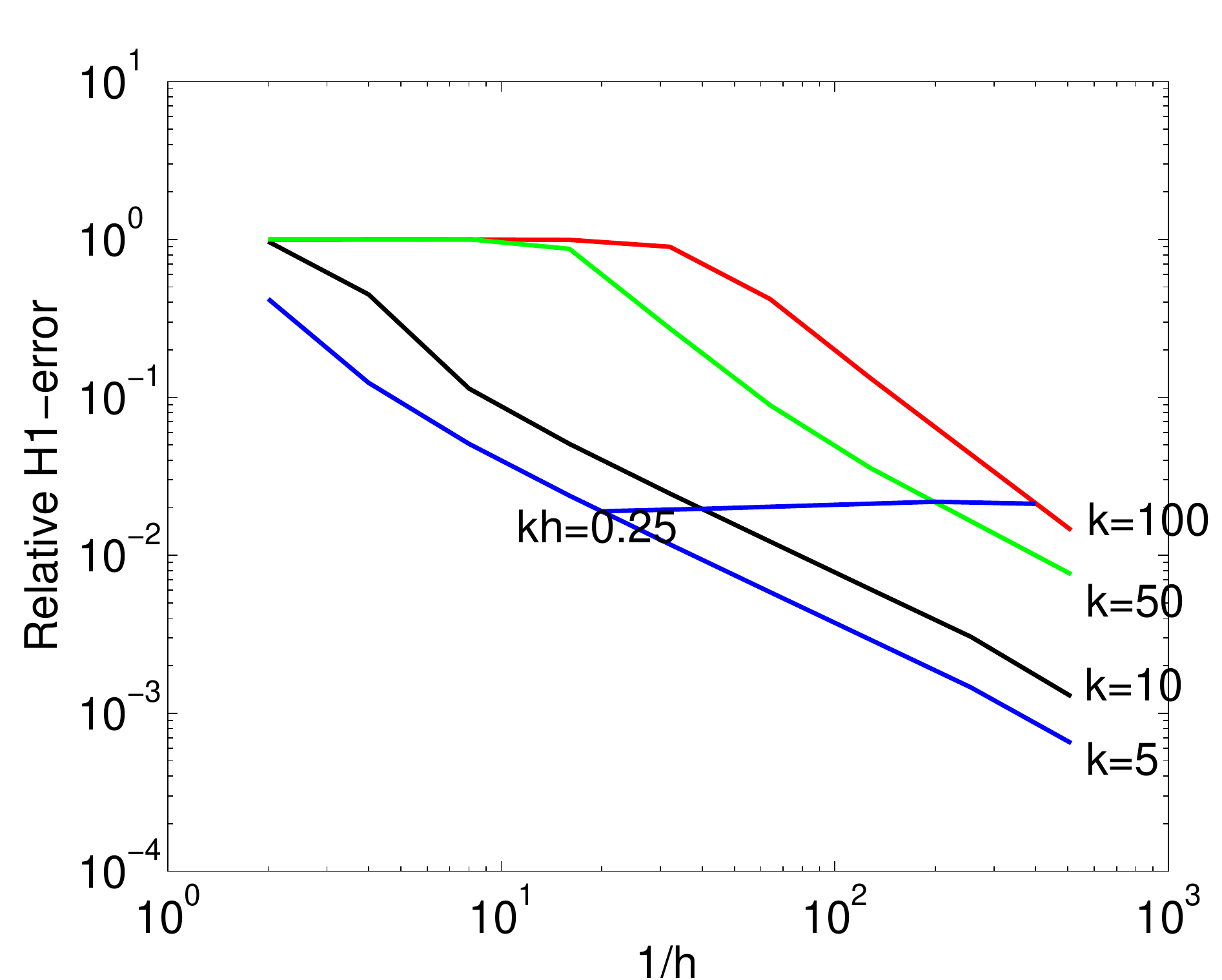}}
  \resizebox{2.45in}{2.15in}{\includegraphics{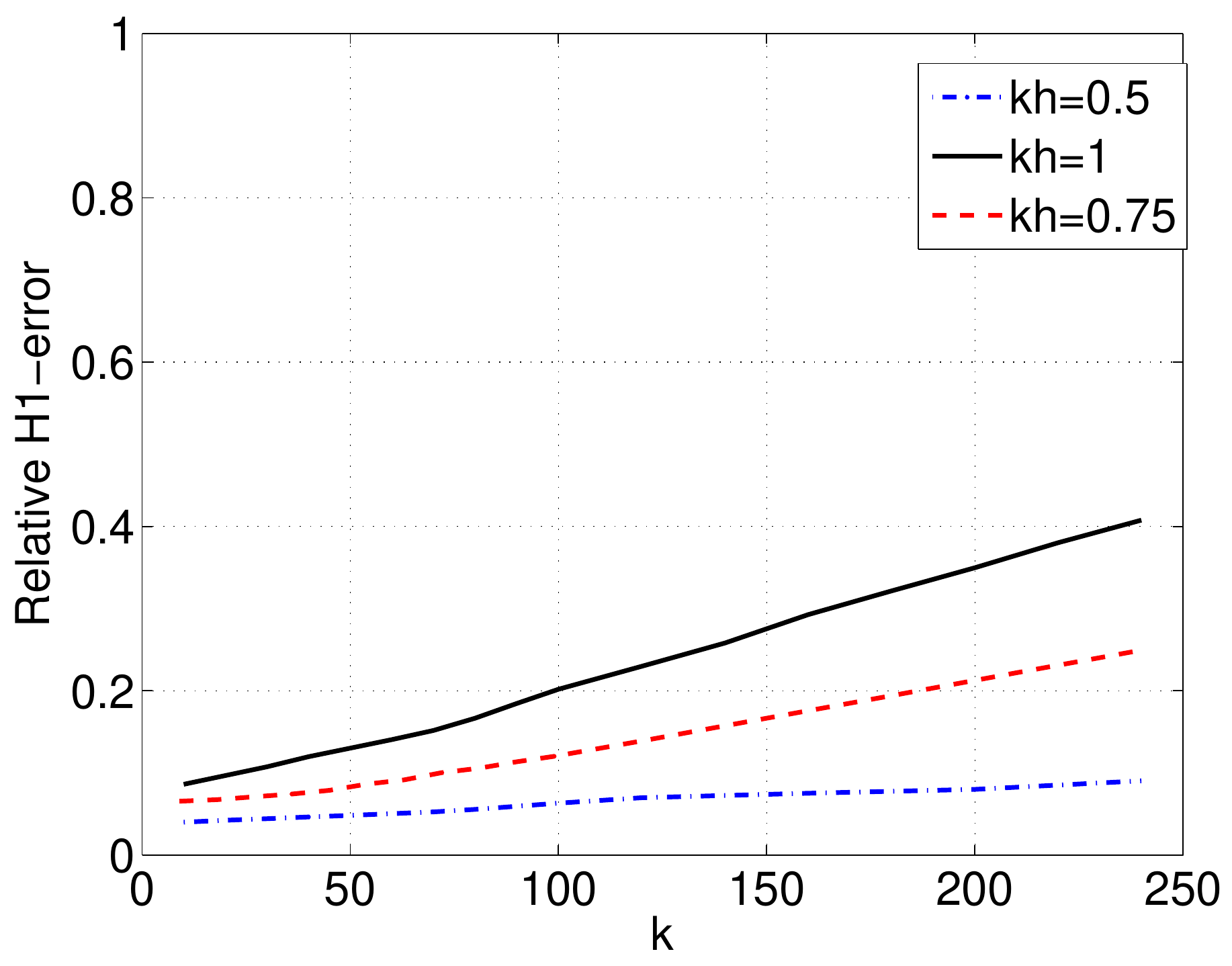}}
\end{tabular}
\caption{Relative $H^1$ error of the WG solution. Left: with respect
to $1/h$; Right: with respect to wave number $k$. } \label{fig.kh}
\end{figure}

To study the non-robustness behavior with respect to the wave number
$k$, i.e., the pollution effect, we solve the corresponding
Helmholtz equation by using piecewise constant WG method with
various mesh sizes for four wave numbers $k=5$, $k=10$, $k=50$, and
$k=100$, see Fig. \ref{fig.kh} (left) for the WG performance. From
Fig. \ref{fig.kh} (left), it can be seen that when $h$ is smaller,
the WG method immediately begins to converge for the cases $k=5$ and
$k=10$. However, for large wave numbers $k=50$ and $k=100$, the
relative error remains to be about $100$\%, until $h$ becomes to be quite
small or $1/h$ is large. This indicates the presence of the
pollution effect which is inevitable in any finite element method
\cite{BabSau}.
In the same figure, we also show the errors of
different $k$ values by fixing $kh=0.25$. Surprisingly, we found
that the relative $H^1$ error does not evidently increase as $k$
becomes larger. The convergence line for $kh=0.25$ looks almost
flat, with a very little slope. In other words, the pollution error
is very small in the present  WG result.
We note that such a result
is as good as the one reported in \cite{fw} by using a
penalized discontinuous Galerkin approach with optimized parameter
values.
In contrast, no parameters are involved in the WG scheme.

On the other hand, the good performance of the WG method for the
case $kh=0.25$ does not mean that the WG method could be free of
pollution effect. In fact, it is known theoretically  \cite{BabSau}
that the pollution error cannot be eliminated completely in two- and
higher-dimensional spaces for Galerkin finite element methods. In
the right chart of Fig. \ref{fig.kh}, we examine the numerical
errors by increasing $k$, under the constraint that $kh$ is a
constant. Huge wave numbers, up to $k=240$, are tested. It can be
seen that when the constant changes from $0.5$ to $0.75$ and $1.0$,
the non-robustness behavior against $k$ becomes more and more
evident. However, the slopes of $kh$=constant lines remain to be
small and the increment pattern with respect to $k$ is always
monotonic. This suggests that the pollution error is well controlled
in the WG solution.

\begin{figure}[!ht]
\centering
\begin{tabular}{cc}
  \resizebox{2.45in}{2.15in}{\includegraphics{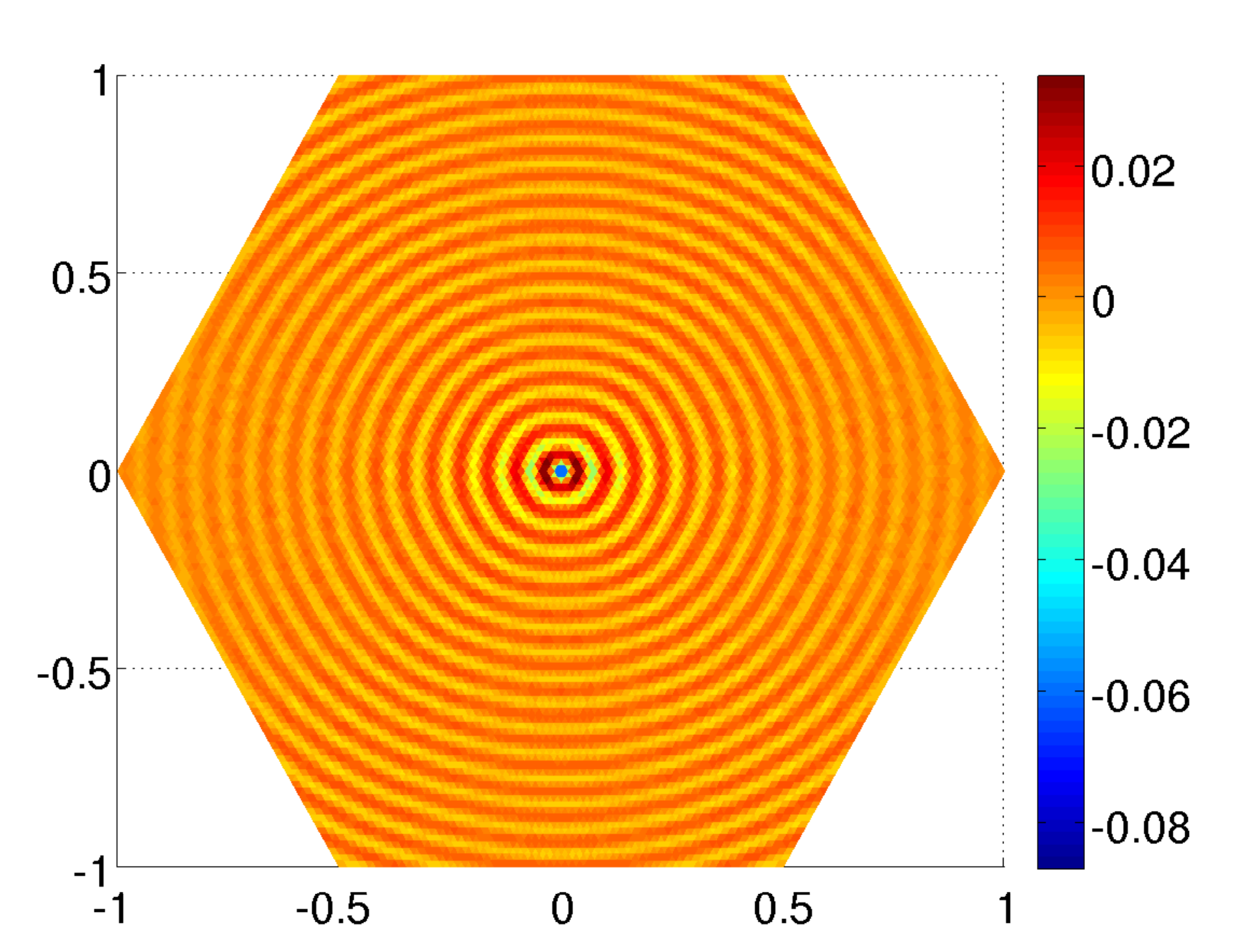}}
  \resizebox{2.45in}{2.15in}{\includegraphics{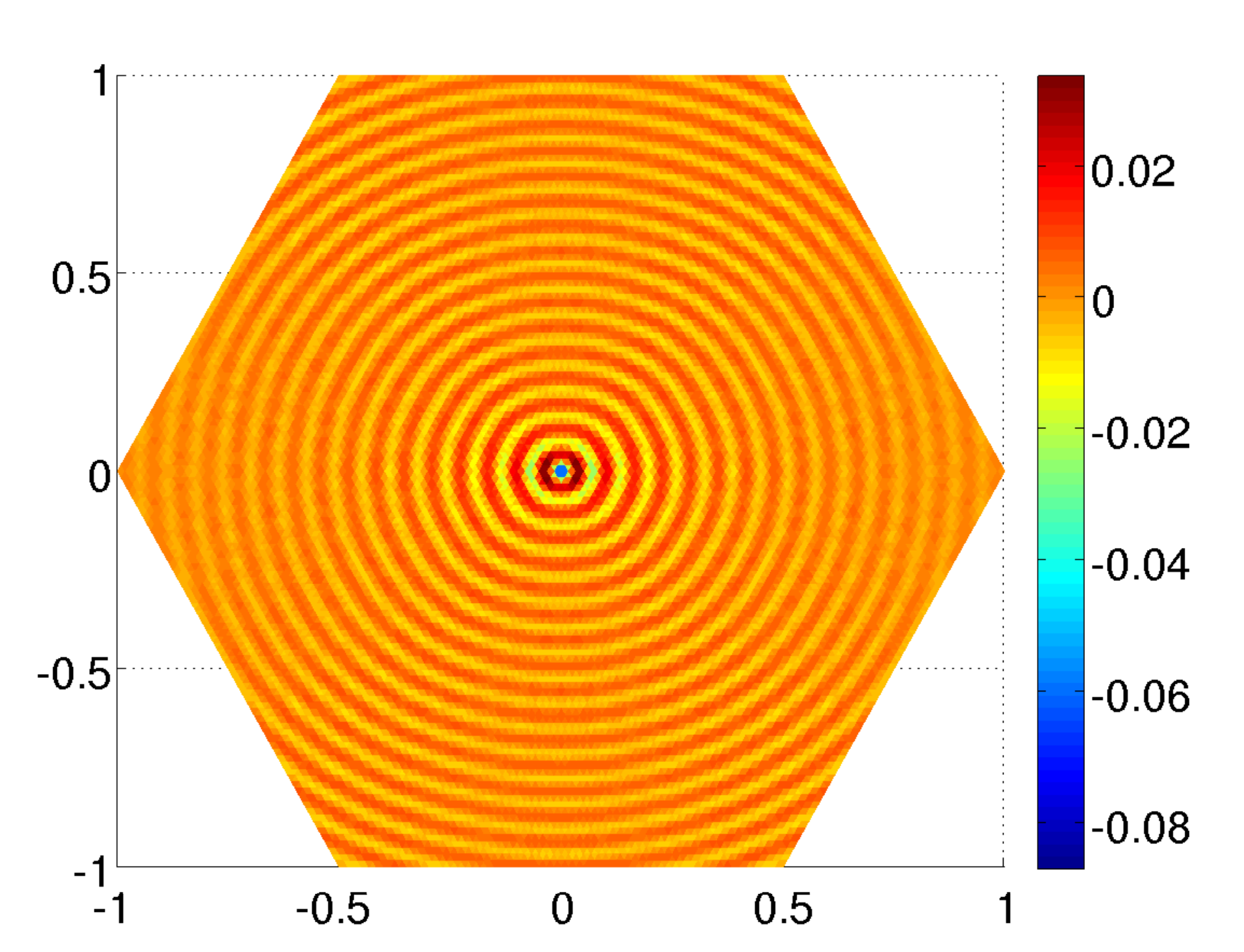}} 
\end{tabular}
\caption{Exact solution (left) and piecewise constant WG
approximation (right) for $k=100,$ and $h=1/60.$} \label{fig.solu2d}
\end{figure}

\begin{figure}[!ht]
\centering
\begin{tabular}{c}
  \resizebox{2.35in}{2.15in}{\includegraphics{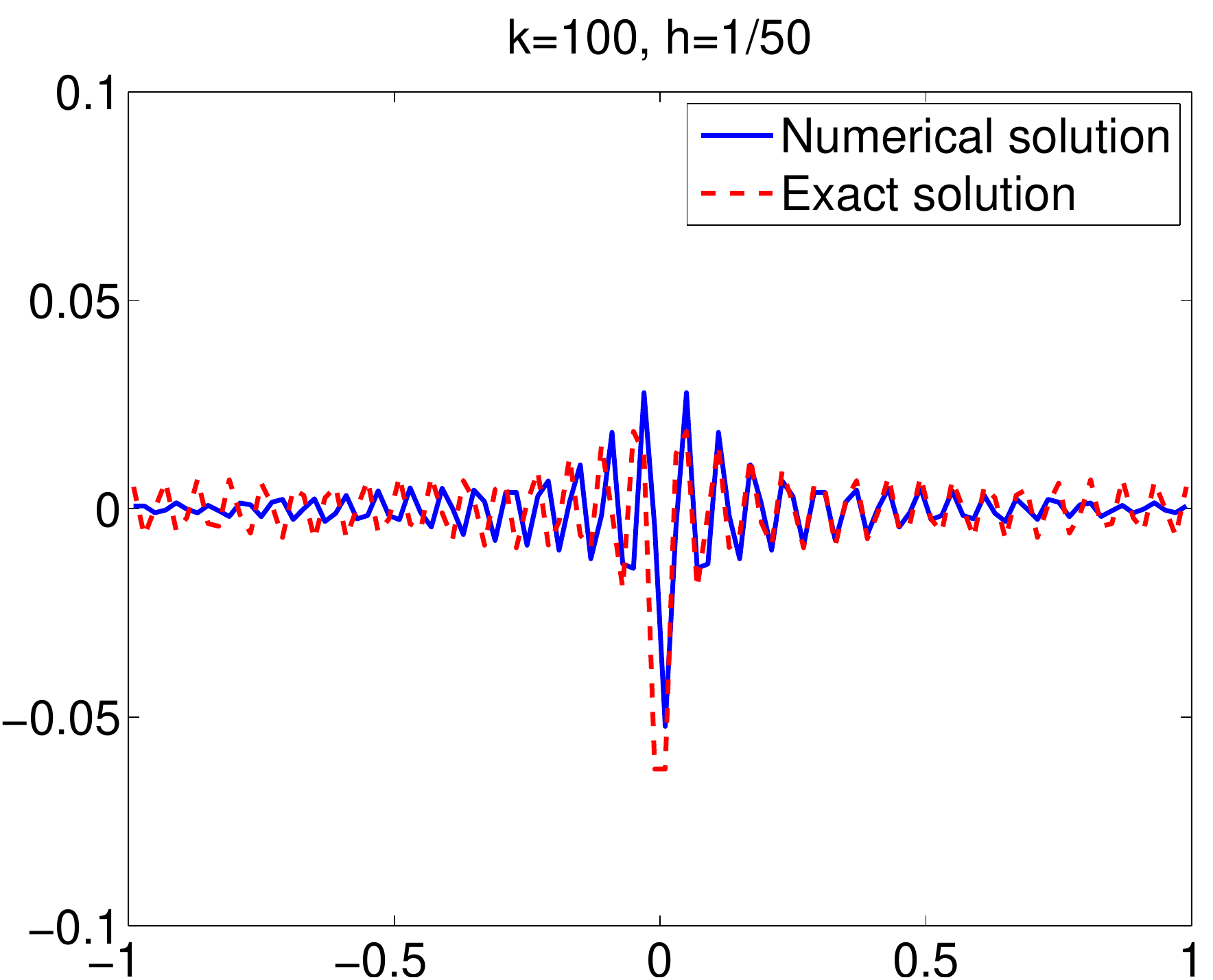}}\\
  \resizebox{2.35in}{2.15in}{\includegraphics{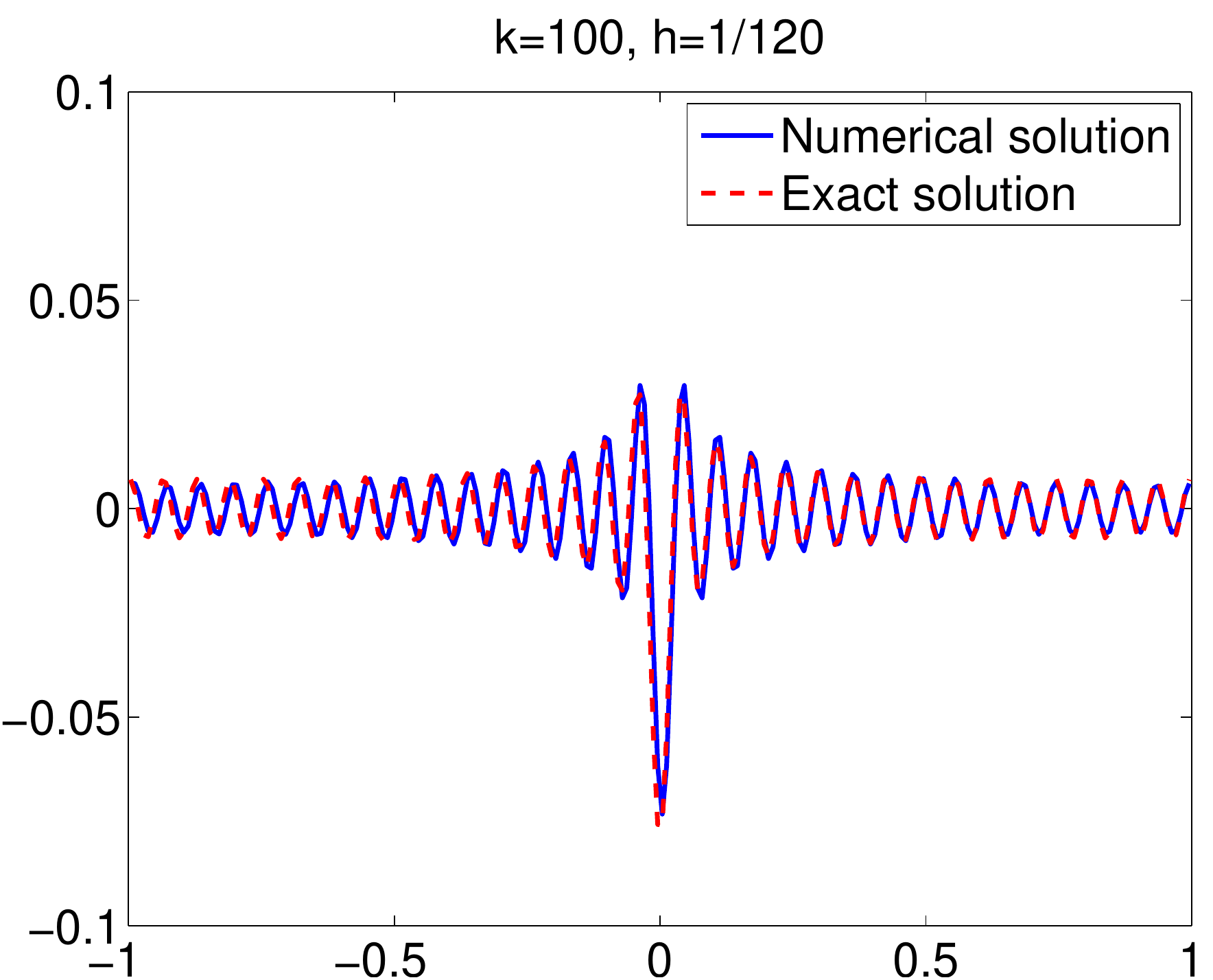}}\\
  \resizebox{2.35in}{2.15in}{\includegraphics{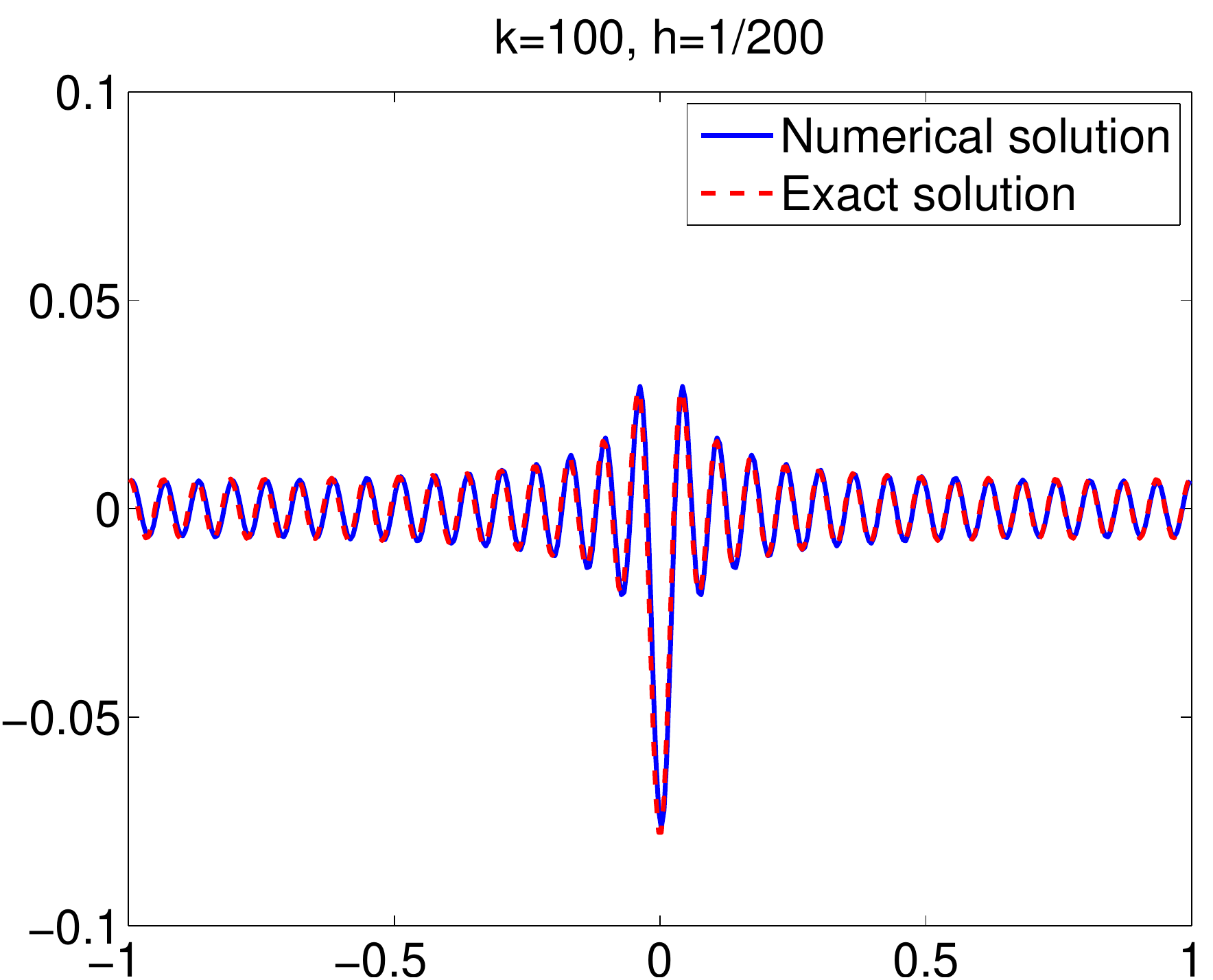}}
\end{tabular}
\caption{The trace plot along $x$-axis or $y=0$ form WG solution
using piecewise constants. } \label{fig.solu1d}
\end{figure}

\begin{figure}[!ht]
\centering
\begin{tabular}{cc}
  \resizebox{2.45in}{2.15in}{\includegraphics{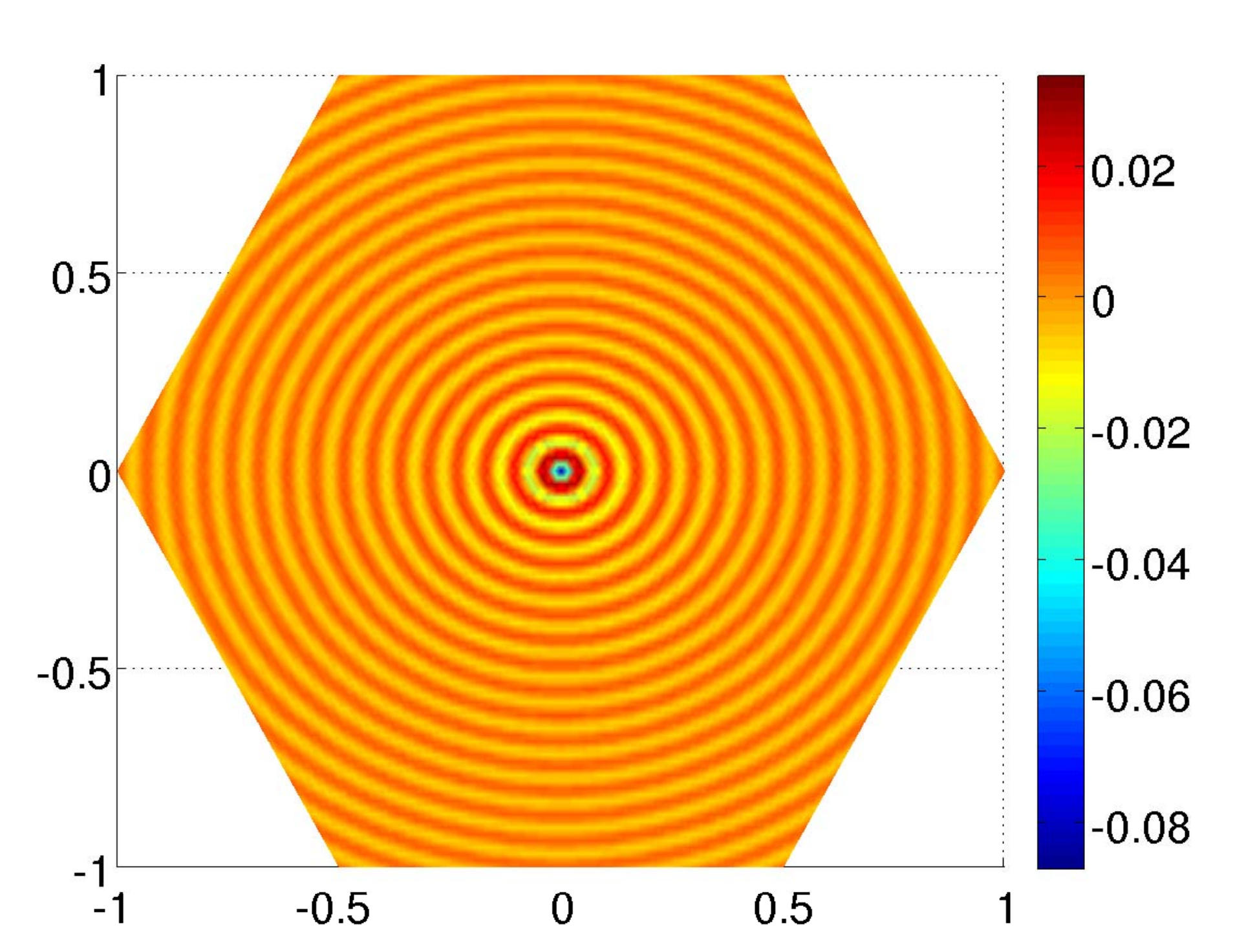}}
  \resizebox{2.45in}{2.15in}{\includegraphics{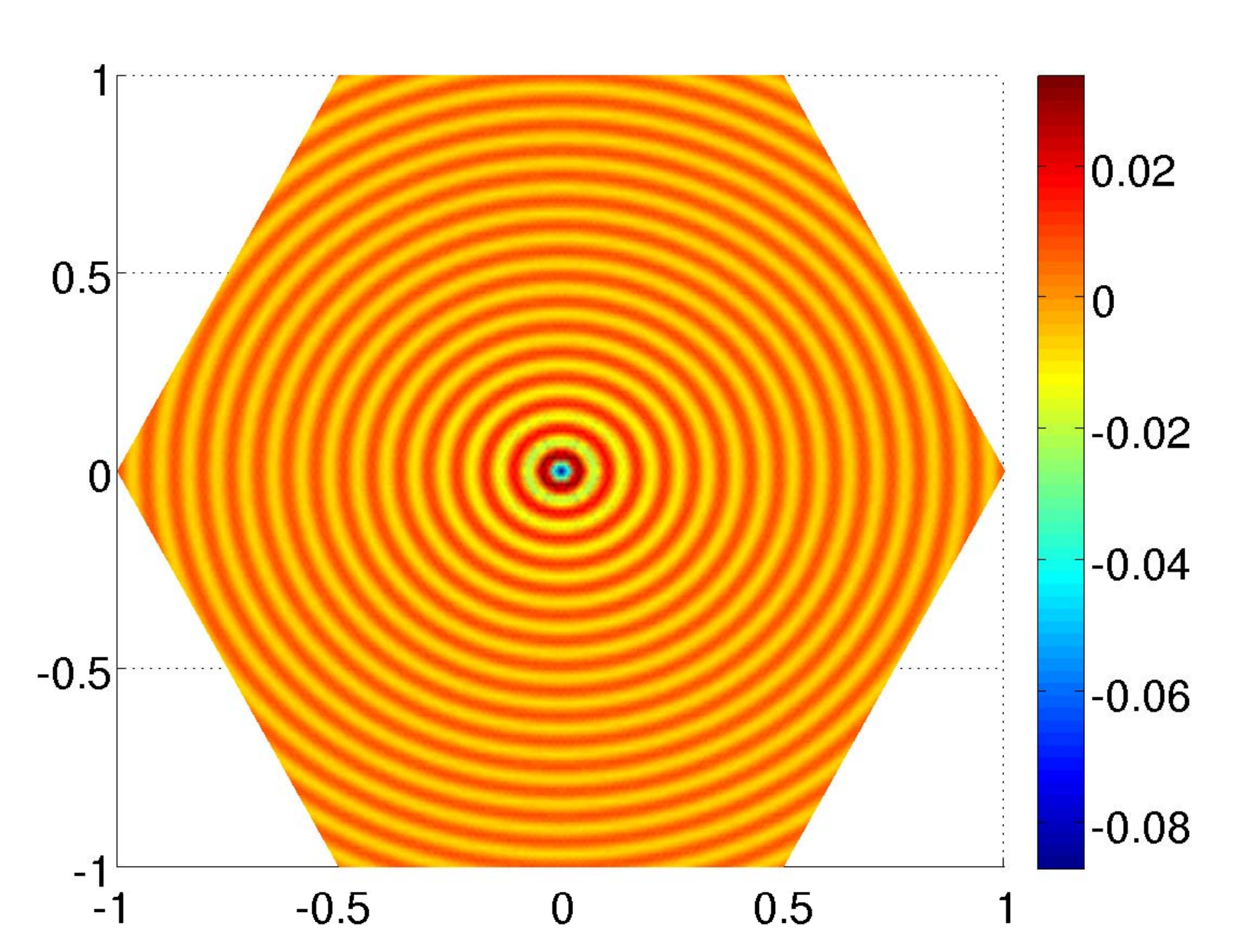}}
\end{tabular}
\caption{Exact solution (left) and piecewise linear WG approximation
(right) for $k=100,$ and $h=1/60.$} \label{fig.solu2d_linear}
\end{figure}

\begin{figure}[!ht]
\centering
\begin{tabular}{c}
  \resizebox{2.35in}{2.15in}{\includegraphics{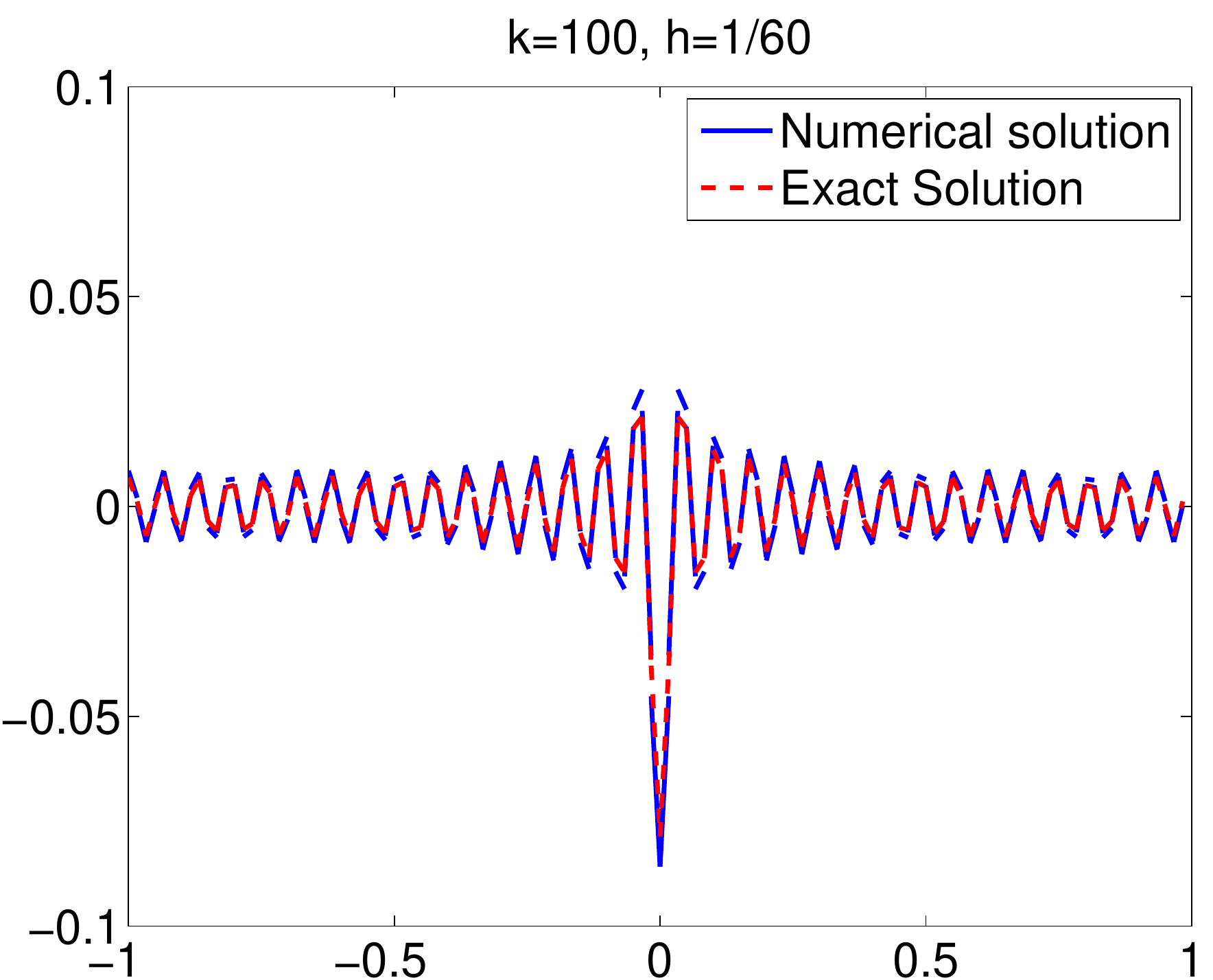}}\\
\end{tabular}
\caption{The trace plot along $x$-axis or $y=0$ form WG solution
using piecewise linear elements. } \label{fig.solu1d_linear}
\end{figure}

In the rest of the paper, we shall present some numerical results
for the WG method when applied to a challenging case of high wave
numbers. In Fig. \ref{fig.solu2d} and \ref{fig.solu2d_linear}, the
WG numerical solutions are plotted against the exact solution of the
Helmholtz problem. Here we take a wave number $k=100$ and mesh size
$h=1/60$ which is relatively a coarse mesh. With such a coarse mesh,
the WG method can still capture the fast oscillation of the
solution. However, the numerically predicted magnitude of the
oscillation is slightly damped for waves away from the center when
piecewise constant elements are employed in the WG method. Such
damping can be seen in a trace plot along $x$-axis or $y=0$. To see
this, we consider an even worse case with $k=100$ and $h=1/50$. The
result is shown in the first chart of Fig. \ref{fig.solu1d}. We note
that the numerical solution is excellent around the center of the
region, but it gets worse as one moves closer to the boundary. If we
choose a smaller mesh size $h=1/120$, the visual difference between
the exact and WG solutions becomes very small, as illustrate in Fig.
\ref{fig.solu1d}. If we further choose a mesh size $h=1/200$, the
exact solution and the WG approximation look very close to each
other. This indicates an excellent convergence of the WG method when
the mesh is refined. In addition to mesh refinement, one may also
obtain a fast convergence by using high order elements in the WG
method. Figure \ref{fig.solu1d_linear} illustrates a trace plot for
the case of $k=100$ and $h=1/60$ when piecewise linear elements are
employed in the WG method. It can be seen that the computational
result with this relatively coarse mesh captures both the fast
oscillation and the magnitude of the exact solution very well.

\section{Concluding Remarks}

The present numerical experiments indicate that the WG method as introduced
in \cite{wy} is a very promising numerical technique for solving the
Helmholtz equations with large wave numbers. This finite element
method is robust, efficient, and easy to implement. On the other
hand, a theoretical investigation for the WG method should be
conducted by taking into account some useful features of the
Helmholtz equation when special test functions are used. It would
also be valuable to test the performance of the WG method when high
order finite elements are employed to the Helmholtz equations
with large wave numbers in two and three dimensional spaces.

Finally, it is appropriate to clarify some differences and connections
between the WG method and other discontinuous finite element methods for solving
the Helmholtz equation.
Discontinuous functions are used to approximate the Helmholtz equation in many other finite element methods such as  discontinuous Galerkin (DG) methods \cite{fw,abcm,Chung}
and  hybrid discontinuous Galerkin (HDG) methods \cite{cdg,cgl,Monk11}.

However, the WG method and the HDG method are fundamentally different in concept and
formulation. The HDG method is formulated by using the standard
mixed method approach for the usual system of first order equations,
while the key to the WG is the use of the discrete weak differential
operators. For a second order elliptic problem, these two methods share the same feature by
approximating first order derivatives or fluxes through a formula
that was commonly employed in the mixed finite element method. For
high order partial differential equations (PDEs),
the WG method is greatly different from the HDG.
Consider the biharmonic equation \cite{mwy-biharmonic} as an example.
The first step of the HDG formulation is to rewrite the fourth order equation to
four first order equations. In contrast, the WG formulation for the biharmonic equation
can be derived directly from the variational form of the biharmonic equation
by replacing the Laplacian operator $\Delta$ by a weak Laplacian $\Delta_w$ and
adding a parameter free stabilizer \cite{mwy-biharmonic}.
It should be emphasized that the concept of
weak derivatives makes the WG a widely applicable numerical technique
for a large variety of PDEs  which we shall report in forthcoming papers.

For the Helmholtz equation studied in this paper,
the WG method and the HDG method
yield the same variational form for the homogeneous Helmholtz equation
with a constant $d$ in (\ref{pde}).
However, the WG discretization differs from the HDG discretization
for an inhomogeneous media problem with $d$ being a spatial function of
$x$ and $y$.
Moreover, the WG method has an advantage over the HDG method
when the coefficient $d$ is degenerated.

\newpage

\end{document}